\newtheorem{theorem}{Theorem}[section]
\newtheorem{lemma}[theorem]{Lemma}
\newtheorem{proposition}{Proposition}[section]
\theoremstyle{definition}
\newtheorem{definition}[theorem]{Definition}
\theoremstyle{remark}
\newtheorem{remark}[theorem]{Remark}
\numberwithin{equation}{section}
\begin{document}
	\title[Staionary solution for relativistic BGK model]{Stationary solutions to the boundary value problem for relativistic BGK model in a slab}
	\author[B.-H. Hwang]{Byung-Hoon Hwang}
	\address{Department of Mathematics, Sungkyunkwan University, Suwon 440-746, Republic of Korea}
	\email{bhh0116@skku.edu}

	\author[S.-B. Yun]{Seok-Bae Yun}
	\address{Department of Mathematics, Sungkyunkwan University, Suwon 440-746, Republic of Korea}

	\email{sbyun01@skku.edu}

	\keywords{relativistic BGK model, relativistic Boltzmann equation, boundary value problem, stationary solution, kinetic theory of gases}
	
\begin{abstract}
 In this paper, we are concerned with the boundary value problem in a slab for the stationary relativistic BGK model of Marle type, which  is a relaxation model of the relativistic Boltzmann equation.
In the case of fixed inflow boundary conditions, we  establish the existence of unique stationary solutions.
 \end{abstract}
	\maketitle

\section{Introduction}
In this paper, we address the existence of stationary solutions for a relativistic BGK model defined on a unit interval:
\begin{equation}\label{SRBGK}
	q_1\frac{\partial f}{\partial x}=w\big(J_f-f\big),\quad (x,q)\in [0,1]\times \mathbb{R}^3,
\end{equation}
endowed with a fixed inflow data at the boundary:
$$
f(0,q)=f_L(q)~~ \text{for}~~ q_1>0,\qquad f(1,q)=f_R(q)~~ \text{for}~~ q_1<0,
$$
for some given functions $f_L$ and $f_R$.
The momentum distribution function $f(x,q)$ represents the number density of relativistic particles at position $x\in [0,1]$ with momentum $q\in \mathbb{R}^3$. On the r.h.s of (\ref{SRBGK}), $w$ is a collision frequency, and $J_f$ denotes the local relativistic Maxwellian defined by
$$
J_f=\frac{n}{M(\beta)}e^{-\beta\left( \sqrt{1+|u|^2}\sqrt{1+|q|^2}-u\cdot q\right)},
$$
where $M(\beta)$ is
$$
M(\beta)=\int_{\mathbb{R}^3}e^{-\beta\sqrt{1+|p|^2}}dp,
$$
and the proper particle density $n$, velocity four-vector $(\sqrt{1+|u|^2},u)$ and the equilibrium temperature $1/\beta$ are defined by the following relations:
(in the following, $q_0$ denotes $\sqrt{1+|q|^2}$ for  $q\in\mathbb{R}^3$).
\begin{align}\label{macro fields}
\begin{split}
 	&\hspace{0.2cm}n^2=\biggl(\int_{\mathbb{R}^3}f dq\biggl)^2-\sum_{i=1}^3\biggl(\int_{\mathbb{R}^3}f\frac{q_i}{q_0} dq\biggl)^2,\cr
 	& n\sqrt{1+|u|^2}=\int_{\mathbb{R}^3}fdq,\quad n u=\int_{\mathbb{R}^3}f\frac{q}{q_0}dq,\cr
 	&\hspace{1.3cm}\frac{K_1}{K_2}(\beta)=\frac{1}{n}\int_{\mathbb{R}^3}f \frac{1}{q_0}dq,
\end{split}
\end{align}
where $K_i$ denotes a modified Bessel function of the second kind:
\[
K_i(\beta)=\int_0^\infty \cosh(ir)e^{-\beta\cosh(r)}dr \quad(i=1,2).
\]
It is shown in  \cite{BCNS} that $K_1/K_2$ is strictly increasing and, therefore,
the last identity in (\ref{macro fields}) uniquely determines $\beta$.

The kinetic theory of relativistic particles began with J\"{u}ttner \cite{Jut1} in 1911 when he derived a relativistic version of the Maxwellian distribution, which is often called the J\"{u}ttner equilibrium. The relativistic generalization of the celebrated Boltzmann equation was made by Lichnerowicz and Marrot in 1941 \cite{LM}.

The complicated structure of the relativistic collision operator, however, has long been a major obstacle in the application of the relativistic Boltzmann equation to various flow problems. To circumvent this difficulty, two types of relaxation in time approximation were suggested \cite{AW,Mar2, Mar3} to develop a numerically amenable model equation which still shares essential features of the collision operator such as the conservation laws and H-theorem. The first one was proposed by Marle \cite{Mar2, Mar3} where the macroscopic fields are represented using the Eckart decomposition, and the other one by Anderson and Witting \cite{AW} where the Landau-Lifshitz decomposition was employed for the representation of the macrosocipic fields.


The relativistic BGK models then have been widely used for various purposes \cite{AP,AW,CK,CK2,CKT,FRS,HM,HMPS,Kremer,Kremer2,Majorana,Mar1,Mar3,MBHS,Stru2,TH,TI}, but rigorous mathematical studies have just started and lots of issues still remain to be addressed. In 2012, Bellouquid et al considered the determination of equilibrium parameters, various formal scaling limits  and the analysis of the linearized problem of the Marle model in \cite{BCNS}. Then, the existence of mild solutions and asymptotic stability of the Marle model near global relativistic equilibrium were proved  in \cite{BNU}. To the best knowledge of authors, these two works are the only mathematical literatures treating the relativistic BGK model analytically.

Much more have been done for the relativistic Boltzmann equation. We refer to \cite{B} for local existence, and \cite{D,DE,GS1,GS2,Strain1,Strain Zhu} for global existence near equilibrium. Momentum regularity was established in  \cite{Guo Strain Momentum} leading to the global existence for the relativistic Vlasov-Maxwell-Boltzmann equation near equilibriums. For the existence of renormalized solutions with general data, see \cite{Dud3,Jiang1,Jiang2}. Existence and some moment estimates for the spatially homogeneous relativistic Boltzmann equation can be found in \cite{LR,Strain Yun}. We also refer to \cite{Cal,Strain2} for Newtonian limits and \cite{SS} for hydrodynamic limits.



\subsection{Main result}
In this paper, we consider the stationary relativistic BGK model of Marle type posed on a bounded interval with fixed inflow boundary data at both ends, and establish the existence of unique stationary solutions.
\begin{definition}\label{mild}
A non-negative function $f\in L^1\left([0,1]\times\mathbb{R}^3\right)$ is called a mild solution of \eqref{SRBGK} if
\begin{align*}
f(x,q)&=\left(e^{-\frac{w}{|q_{1}|}x}f_L+\frac{w}{|q_1|}\int_0^xe^{-\frac{w}{|q_1|}(x-y)}J_fdy\right)1_{q_1>0}\cr
&+\left(e^{-\frac{w}{|q_{1}|}(1-x)}f_R+\frac{w}{|q_1|}\int_x^1e^{-\frac{w}{|q_1|}(y-x)}J_fdy\right)1_{q_1<0}.
\end{align*}
\end{definition}
\noindent For brevity, we denote
\begin{align*}
f_{LR}&=f_L1_{q_1>0}+f_R1_{q_1<0},\cr
f^e_{LR}&=e^{-\frac{w}{|q_{1}|}x}f_{L}1_{q_1>0}+e^{-\frac{w}{|q_{1}|}(1-x)}f_{R}1_{q_1<0}.
\end{align*}
We then define quantities $a_l, a_u$ and $\lambda$ by
\begin{align*}
a_l=\int_{\mathbb{R}^3} e^{-\frac{w}{|q_1|}}f_{LR}\frac{1}{q_0^2} dq,\qquad a_u=2\int_{\mathbb{R}^3} f_{LR}dq,
\end{align*}
and
\begin{align}\label{lambda}
	\lambda =\biggl(\int_{\mathbb{R}^3} f^e_{LR} \frac{1}{q_0}dq\biggl)\biggl(\int_{\mathbb{R}^3} f^e_{LR} dq\int_{\mathbb{R}^3} f^e_{LR} \frac{1}{q^2_0}dq\biggl)^{-\frac{1}{2}}.
\end{align}
Our main result is as follows:
\begin{theorem}\label{main}
Suppose the inflow boundary data $f_{LR}$ is non-negative and belongs to $L^1(\mathbb{R}^3_q)$.
Assume further that $a_l>0$.
Then, we can find $\varepsilon>0$ such that if $w<\varepsilon$, then there exists a unique mild solution $f$ to \eqref{SRBGK} such that
$$
\int_{\mathbb{R}^3} f\frac{1}{q^2_0}dq\geq a_l,\qquad \int_{\mathbb{R}^3} fdq\le a_u, \qquad \frac{K_1}{K_2}(\beta)\le \sqrt{\lambda}.
$$
\end{theorem}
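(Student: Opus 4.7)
The plan is to realize $f$ as a fixed point of the integral operator $\Phi$ suggested by Definition~\ref{mild}, acting on the closed convex set
$$
\mathcal{C} = \Bigl\{ f \in L^1\bigl([0,1]\times\mathbb{R}^3\bigr) : f \geq 0,\ \int_{\mathbb{R}^3} \frac{f}{q_0^2}\,dq \geq a_l,\ \int_{\mathbb{R}^3} f\,dq \leq a_u,\ \frac{K_1}{K_2}(\beta) \leq \sqrt{\lambda} \Bigr\},
$$
with the displayed pointwise inequalities imposed for every $x \in [0,1]$, and to obtain existence and uniqueness via Banach's fixed-point theorem.

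\emph{Step 1 (well-posedness of $J_f$ on $\mathcal{C}$).} A componentwise Cauchy--Schwarz argument applied to \eqref{macro fields} yields
$$
n(x)^2 \geq \Bigl(\int_{\mathbb{R}^3} f\,dq\Bigr)\Bigl(\int_{\mathbb{R}^3} \frac{f}{q_0^2}\,dq\Bigr),
$$
so $n$ is uniformly bounded away from zero on $\mathcal{C}$; $u$ is then recovered, and the monotonicity of $K_1/K_2$ from \cite{BCNS} pins down $\beta$. Consequently $J_f$ and $\Phi f$ are well-defined.

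\emph{Step 2 (invariance $\Phi(\mathcal{C})\subset\mathcal{C}$).} Non-negativity of $\Phi f$ is immediate from $J_f \geq 0$, as is the lower bound
$$
\int \frac{\Phi f}{q_0^2}\,dq \geq \int \frac{f^e_{LR}}{q_0^2}\,dq \geq \int e^{-w/|q_1|}\frac{f_{LR}}{q_0^2}\,dq = a_l,
$$
since $e^{-wx/|q_1|} \geq e^{-w/|q_1|}$ on $[0,1]$ (and the corresponding bound holds for $q_1<0$). For the $L^1$ upper bound, I would exchange the orders of integration in the integral term of $\Phi f$ and exploit the identity $\int J_f\,dq = nu_0 = \int f\,dq \leq a_u$ together with $\int_0^x (w/|q_1|) e^{-w(x-y)/|q_1|}\,dy \leq 1$ to obtain a perturbative correction of order $w$. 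The constraint on $K_1/K_2(\beta)$ follows from the Cauchy--Schwarz estimate of Step 1, which yields
$$
\frac{K_1}{K_2}(\beta(x)) \leq \frac{\int (\Phi f)/q_0\,dq}{\bigl(\int \Phi f\,dq\,\int (\Phi f)/q_0^2\,dq\bigr)^{1/2}},
$$
and the right-hand side approaches $\lambda$ as $w \to 0$; since $\lambda \leq 1$ (once more Cauchy--Schwarz), the gap $\sqrt{\lambda} - \lambda \geq 0$ delivers the required slack once $w$ is small.

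\emph{Step 3 (contraction) and main obstacle.} For $f, g \in \mathcal{C}$,
$$
\bigl|(\Phi f - \Phi g)(x,q)\bigr| \leq \frac{w}{|q_1|}\int e^{-w|x-y|/|q_1|}\,|J_f - J_g|(y,q)\,dy,
$$
and Lipschitz continuity of the Jüttner distribution in $(n,u,\beta)$ --- combined with Lipschitz continuity of these macro fields in $f$, which is available uniformly on $\mathcal{C}$ thanks to the strict lower bound on $n$ and the upper bound on $\beta$ that $K_1/K_2(\beta) \leq \sqrt{\lambda} < 1$ supplies via the monotonicity of $K_1/K_2$ --- produces a Lipschitz constant of order $w$ on $\Phi$, so that Banach's theorem applies. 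The delicate part is Step 2, in particular the propagation of the $K_1/K_2$ bound, which hinges on the gap between $\lambda$ and $\sqrt{\lambda}$ (and therefore on the fact, implicit in the hypotheses, that the data are not concentrated on a single energy shell); controlling the $1/|q_1|$ singularity against $J_f$ in the perturbative bounds will also require combining the exponential damping with pointwise Jüttner estimates such as $u_0 q_0 - u\cdot q \geq q_0/(2u_0)$ and uniform moment bounds coming from $\mathcal{C}$.
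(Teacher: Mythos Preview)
Your overall strategy---Banach fixed point on the closed set $\mathcal{C}$ defined by the three displayed constraints---is exactly the paper's. Step~1, Step~3, and the propagation of the $K_1/K_2$ bound in Step~2 (via the Cauchy--Schwarz lower bound on $n_\Phi$ and the gap between $\lambda$ and $\sqrt\lambda$) are essentially the paper's arguments.

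The one place where your sketch does not work as written is the $L^1$ upper bound in Step~2. You cannot ``exchange the orders of integration and exploit $\int J_f\,dq = nu_0 \le a_u$ together with $\int_0^x (w/|q_1|)e^{-w(x-y)/|q_1|}\,dy \le 1$'': the kernel carries the singular weight $w/|q_1|$, so integrating first in $y$ and bounding by $1$ loses all smallness in $w$ and would anyway require $\sup_y J_f$, not $\int_y$; integrating first in $q$ leaves the weight $1/|q_1|$ against $J_f$, and $\int J_f/|q_1|\,dq$ is not controlled by $nu_0$. Neither order yields a correction that is small in $w$. The paper does not use the moment identity here at all. Instead it first proves the pointwise bound $J_f \le C_1 e^{-C_2 q_0}$ (this requires both a lower bound $\beta\ge\beta_l$, from $K_1/K_2\ge a_l/a_u$, and an upper bound $\beta\le\beta_u$, from the constraint $K_1/K_2\le\sqrt\lambda$; so that constraint is already essential at this stage, not only in Step~3), and then splits the $q_1$-integral into $\{|q_1|\le w\}$, $\{w<|q_1|\le 1/w\}$, $\{|q_1|>1/w\}$ to obtain a bound of order $w\ln(1/w)$, not $O(w)$. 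You allude to exactly this mechanism in your final paragraph; but it is this pointwise-bound-plus-splitting argument---not Fubini and the moment identity---that drives both the $L^1$ upper bound and the contraction estimate, and your Step~2 should be rewritten around it.
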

\begin{remark}
(1) $f_{LR}\in L^1(\mathbb{R}^3_q)$ guarantees that $a_l, a_u$ are well-defined, and the condition $a_l>0$ guarantees that the divisor in the ratio $\lambda$ is non-zero.\newline
\noindent (2) Considering the conditions under which the identity holds in the H\"{o}lder inequality, we see that $a_{\ell}>0$ implies $0<\lambda<1$. Therefore, $\sqrt{\lambda}$ is
strictly less than 1 (see the following paragraph below to see why this condition is important).
\end{remark}
To prove our main result, we adapt and make a relativistic extension of the argument in \cite{BY} where one of the authors considered  the stationary problem of the classical ellipsoidal BGK model for classical particles.
The relativistic nature of the equation complicates the problem at virtually every point, and makes the adaptation nontrivial.
One of the key differences arises in  the way the relativistic counterpart of the local temperature $1/\beta$ is defined, which is implicitly defined through a nonlinear functional relation:
\begin{align*}
 \frac{K_1}{K_2}(\beta)=\frac{1}{n}\int_{\mathbb{R}^3}f\frac{1}{q_0}dq.
\end{align*}
This implies that we need to control $(K_1/K_2)(\beta)$  to get a proper bound on $\beta$. 
In view of this, we first  note that we already have some control on it: $0<(K_1/K_2)(\beta)<1$,
which holds trivially by the definitions of $K_1$ and $K_2$.  This trivial bound, however, gives no information on the size of $\beta$ since
$(K_1/K_2)^{-1}(1)=\infty$. And without the information on the size of $\beta$,
we cannot guarantee that our solution space is invariant under our solution map, which is essential to close the
fixed point argument.
Therefore, we need to bound $K_1/K_2$ by a constant that is  strictly less than 1 (see the remark 1.3 (2)). This is accomplished in Lemma \ref{222} using the
following estimate controlling the relativistic Maxwellian by the collision frequency and the boundary data:
 \begin{align*}
&\int_{q_1>0}\frac{w}{|q_1|}\int_0^xe^{-\frac{w}{|q_1|}(x-y)}J_fdy\frac{dq}{q_0}+	\int_{q_1<0}\frac{w}{|q_1|}\int_x^1e^{-\frac{w}{|q_1|}(y-x)}J_fdy\frac{dq}{q_0}\cr
&\hspace{1.4cm}\le \frac{16C_1}{C^2_2}\left(2w\ln\frac{1}{w}+(1+e)w+\frac{\sqrt{2}w^2}{C_2}e^{-\frac{C_2}{\sqrt{2}w}}\right),
\end{align*}
which is established in Lemma \ref{phi2}.\newline


The paper is organized as follows: In Section 2, we define the solution space and present several preliminary technical estimates.
In Section 3, we define the solution operator and  show that it maps the solution space into itself.
In Section 4, we show that the solution operator is a contraction mapping on the solution space under our assumptions.
\newline
%
%
%
%

\section{Estimates in solution space}
We define our solution space $\Omega$ by
$$
\Omega=\left\{f(x,q)\in L^1\left([0,1]\times\mathbb{R}^3\right)~|~ f~ \text{satisfies}~ (\mathcal{A}) \right\}
$$
where the property $(\mathcal{A})$ denotes
$$
\qquad f\ge 0,\qquad a_l\le \int_{\mathbb{R}^3} f\frac{1}{q^2_0}dq,\qquad \int_{\mathbb{R}^3} fdq\le a_u, \qquad \frac{K_1}{K_2}(\beta)\le \sqrt{\lambda}.
$$
We will show that the solution to the boundary value problem (\ref{SRBGK}) is given as a unique fixed point in $\Omega$ of a solution operator, which will be defined later. First, we need to establish several
preliminary estimates.

\begin{lemma}\label{11}
Let $f\in\Omega$, then the macroscopic quantities $n, u, \beta$ constructed from $f$ by the relation (\ref{macro fields}) satisfy
$$
a_l\le n\le a_u,\qquad |u|\le \frac{\sqrt{2}a_u}{a_l},\qquad  \frac{a_l}{a_u}\leq \frac{K_1}{K_2}(\beta)\leq  \sqrt{\lambda}.
$$
\end{lemma}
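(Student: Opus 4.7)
Each of the three bounds is obtained by combining one of the defining identities in (1.2) with the constraints built into the solution space $\Omega$. The upper estimate $n \le a_u$ is immediate: from $n\sqrt{1+|u|^2}=\int f\,dq$ and $\sqrt{1+|u|^2}\ge 1$ one gets $n\le \int f\,dq \le a_u$.

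The lower bound $n\ge a_l$ is the only non-routine step, and it is here that the relativistic structure really enters. The plan is to apply the Cauchy--Schwarz inequality componentwise,
\[
\left(\int_{\mathbb{R}^3} f\frac{q_i}{q_0}\,dq\right)^{\!2} = \left(\int_{\mathbb{R}^3} \sqrt{f}\cdot\sqrt{f}\,\frac{q_i}{q_0}\,dq\right)^{\!2} \le \int_{\mathbb{R}^3} f\,dq\cdot \int_{\mathbb{R}^3} f\,\frac{q_i^2}{q_0^2}\,dq,
\]
sum over $i=1,2,3$, and then exploit the relativistic identity $|q|^2/q_0^2 = 1 - 1/q_0^2$ (which follows from $q_0^2 = 1+|q|^2$) to rewrite the sum of squares as $\int f\,dq - \int f/q_0^2\,dq$. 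Substituting into the definition of $n^2$ produces
\[
n^2 \ge \left(\int_{\mathbb{R}^3} f\,dq\right)^{\!2} - \int_{\mathbb{R}^3} f\,dq\cdot\left(\int_{\mathbb{R}^3} f\,dq - \int_{\mathbb{R}^3} \frac{f}{q_0^2}\,dq\right) = \int_{\mathbb{R}^3} f\,dq \cdot \int_{\mathbb{R}^3} \frac{f}{q_0^2}\,dq.
\]
Since $q_0\ge 1$ gives $\int f\,dq \ge \int f/q_0^2\,dq \ge a_l$, both factors on the right are at least $a_l$, and the lower bound $n\ge a_l$ follows.

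The remaining estimates are then short. For $|u|$, combining $n\sqrt{1+|u|^2}=\int f\,dq\le a_u$ with $n\ge a_l$ yields $\sqrt{1+|u|^2}\le a_u/a_l$, whence $|u|\le \sqrt{(a_u/a_l)^2-1}\le \sqrt{2}\,a_u/a_l$. For the ratio $K_1/K_2$, the upper bound $(K_1/K_2)(\beta)\le\sqrt{\lambda}$ is part of the definition of $\Omega$ and needs no work. For the lower bound, $q_0\ge 1$ gives $1/q_0\ge 1/q_0^2$, so $\int f/q_0\,dq \ge \int f/q_0^2\,dq \ge a_l$; dividing by $n\le a_u$ produces $(K_1/K_2)(\beta) = n^{-1}\int f/q_0\,dq \ge a_l/a_u$. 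The main obstacle is the Cauchy--Schwarz step for $n\ge a_l$; once that is in place everything else is a direct reading of the definitions.
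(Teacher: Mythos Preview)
Your proof is correct and follows essentially the same approach as the paper: the core step---applying Cauchy--Schwarz componentwise and using $|q|^2/q_0^2 = 1 - 1/q_0^2$ to obtain $n^2 \ge \int f\,dq\cdot\int f/q_0^2\,dq$---is exactly what the paper does. The only cosmetic differences are that for $n\le a_u$ the paper drops the negative term in the definition of $n^2$ rather than invoking $\sqrt{1+|u|^2}\ge 1$, and for $|u|$ the paper bounds $|nu|=|\int f\,q/q_0\,dq|$ directly rather than going through $\sqrt{1+|u|^2}\le a_u/a_l$; both variants are equally short.
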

\begin{proof}First, we see from the definition of $n$ that
$$
n^2=\biggl(\int_{\mathbb{R}^3}f dq\biggl)^2-\sum_{i=1}^3\biggl(\int_{\mathbb{R}^3}f\frac{q_i}{q_0} dq\biggl)^2\le \biggl(\int_{\mathbb{R}^3}f dq\biggl)^2\le a_u^2.
$$
For the lower bound of $n$, we employ the H\"{o}lder inequality as follows:
\begin{align*}
\sum_{i=1}^3\biggl(\int_{\mathbb{R}^3}f\frac{q_i}{q_0} dq\biggl)^2&\leq\sum_{i=1}^3\int_{\mathbb{R}^3}f dq\int_{\mathbb{R}^3}f \frac{q_i^2}{q_0^2}dq\cr
&=\int_{\mathbb{R}^3}f dq\sum_{i=1}^3\int_{\mathbb{R}^3}f \frac{q_i^2}{q_0^2}dq\cr
&=\int_{\mathbb{R}^3}f dq\int_{\mathbb{R}^3}f \frac{|q|^2}{q_0^2}dq.
\end{align*}
to get
\begin{align}\label{holder}
\begin{split}
n^2&=\biggl(\int_{\mathbb{R}^3}f dq\biggl)^2-\sum_{i=1}^3\biggl(\int_{\mathbb{R}^3}f\frac{q_i}{q_0} dq\biggl)^2\cr
&\geq\biggl(\int_{\mathbb{R}^3}f dq\biggl)^2-\int_{\mathbb{R}^3}f dq\int_{\mathbb{R}^3}f \frac{|q|^2}{q_0^2}dq\cr
&=\biggl(\int_{\mathbb{R}^3}f dq\biggl)\biggl(\int_{\mathbb{R}^3}f dq-\int_{\mathbb{R}^3}f \frac{|q|^2}{q_0^2}dq\biggl)\cr
&=\int_{\mathbb{R}^3}f dq\int_{\mathbb{R}^3}f \frac{1}{q^2_0}dq\cr
&\ge \biggl(\int_{\mathbb{R}^3}f \frac{1}{q^2_0}dq\biggl)^2\cr
&= a_l^2.
\end{split}
\end{align}
Using this, we can bound $|u|$ from above:
\[
\displaystyle|u|=\frac{|nu|}{n}\le \frac{1}{a_l}\bigg|\int_{\mathbb{R}^3} f\frac{q}{q_0} dq\bigg|\le \frac{\sqrt{2}}{a_l}\int_{\mathbb{R}^3} f\frac{|q|}{\sqrt{1+|q|^2}} dq\le \frac{\sqrt{2}}{a_l}\int_{\mathbb{R}^3} fdq \le \frac{\sqrt{2}a_u}{a_l},
\]	
and bound $K_1/{K_2}$ from below:
$$
\sqrt{\lambda}\ge\frac{K_1}{K_2}(\beta)=\frac{1}{n}\int_{\mathbb{R}^3}f \frac{1}{q_0}dq\ge \frac{1}{a_u}\int_{\mathbb{R}^3}f \frac{1}{q^2_0}dq\ge \frac{a_l}{a_u}.
$$
\end{proof}	
\begin{lemma}\label{22}
For $f\in \Omega$, there exist constants $C_{1}$ and $C_2$ depending on $a_l,~ a_u$ such that
$$
J_f \le C_{1}e^{-C_{2}\sqrt{1+|q|^2}}.
$$
\end{lemma}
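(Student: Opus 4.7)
The plan is to read off the formula
\[
J_f=\frac{n}{M(\beta)}e^{-\beta\left(\sqrt{1+|u|^2}\sqrt{1+|q|^2}-u\cdot q\right)}
\]
and separately bound the prefactor $n/M(\beta)$ and the exponent in terms of $a_l,a_u$ alone. All of the dependence on $f$ is funneled through the macroscopic fields $n,u,\beta$, which are already controlled by the previous lemma. So the task is essentially to convert those bounds into a pointwise Gaussian-type bound.

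First I would pin down $\beta$. Lemma \ref{11} gives $a_l/a_u\le (K_1/K_2)(\beta)\le\sqrt{\lambda}<1$, and since $K_1/K_2$ is strictly increasing on $(0,\infty)$ with limits $0$ and $1$ at $0$ and $\infty$ respectively (cited from \cite{BCNS}), its inverse at these two values produces constants $0<\beta_\ell\le\beta\le\beta_u<\infty$ depending only on $a_l,a_u,\lambda$ (and $\lambda$ itself depends only on the boundary data). This in turn bounds the prefactor: $n\le a_u$, while $M(\beta)=\int_{\mathbb{R}^3}e^{-\beta\sqrt{1+|p|^2}}\,dp$ is decreasing in $\beta$, so $M(\beta)\ge M(\beta_u)>0$. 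Thus $n/M(\beta)\le a_u/M(\beta_u)$, which I would take to be $C_1$.

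Next I would bound the exponent from below by a multiple of $\sqrt{1+|q|^2}$, uniformly in $u$. Using $u\cdot q\le |u||q|$ and rationalizing,
\[
\sqrt{1+|u|^2}\sqrt{1+|q|^2}-u\cdot q\ge \sqrt{1+|u|^2}\sqrt{1+|q|^2}-|u||q|=\frac{1+|u|^2+|q|^2}{\sqrt{1+|u|^2}\sqrt{1+|q|^2}+|u||q|}.
\]
Bounding the denominator by $2\sqrt{1+|u|^2}\sqrt{1+|q|^2}\le 2(1+|u|)\sqrt{1+|q|^2}$ and the numerator by $|q|^2+1$ gives
\[
\sqrt{1+|u|^2}\sqrt{1+|q|^2}-u\cdot q\ \ge\ \frac{\sqrt{1+|q|^2}}{2(1+|u|)}.
\]
Combining with $|u|\le \sqrt{2}a_u/a_l$ from Lemma \ref{11} and $\beta\ge\beta_\ell$, we get
\[
\beta\bigl(\sqrt{1+|u|^2}\sqrt{1+|q|^2}-u\cdot q\bigr)\ \ge\ \frac{\beta_\ell}{2(1+\sqrt{2}a_u/a_l)}\sqrt{1+|q|^2},
\]
so the claim follows with $C_2:=\beta_\ell/\bigl(2(1+\sqrt{2}a_u/a_l)\bigr)$, both $C_1,C_2$ depending only on $a_l,a_u$ (and implicitly the boundary data through $\lambda$).

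The only step that is not a direct quotation of Lemma \ref{11} is extracting a finite upper bound $\beta_u$ for $\beta$: this is exactly the point of the paper's introductory discussion, where $(K_1/K_2)^{-1}(1)=\infty$ is what forces the strict bound $\sqrt{\lambda}<1$ to be available. With that in hand, the remaining computations are routine algebra, and the rationalization identity above is the only nontrivial ingredient.
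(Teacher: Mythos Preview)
Your proof is correct and follows essentially the same approach as the paper: bound $\beta$ in a compact interval $[\beta_\ell,\beta_u]$ via the monotonicity of $K_1/K_2$, control $n/M(\beta)$ by $a_u/M(\beta_u)$, and then bound the exponent below by a multiple of $\sqrt{1+|q|^2}$ using $|u|\le\sqrt{2}a_u/a_l$. The only cosmetic difference is in this last step: the paper simply writes $\sqrt{1+|u|^2}\sqrt{1+|q|^2}-|u||q|\ge(\sqrt{1+|u|^2}-|u|)\sqrt{1+|q|^2}$ and invokes the monotone decrease of $h(x)=\sqrt{1+x^2}-x$, whereas you rationalize; both arguments are elementary and yield constants of the same order.
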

\begin{proof}
It is shown in \cite{BCNS} that $(K_1/K_2)(\beta)$ is strictly increasing, and the range is $[0,1)$ for $\beta\in(0,\infty)$. Therefore, from the result
of Lemma \ref{11}:
$$
\frac{a_l}{a_u}\le\frac{K_1}{K_2}(\beta)\le \sqrt{\lambda},
$$
we can specify the range of $\beta$ as follows:	
\begin{equation}\label{b} \beta_l\equiv\left(\frac{K_1}{K_2}\right)^{-1}\left(\frac{a_l}{a_u}\right)\le\beta
\le\left(\frac{K_1}{K_2}\right)^{-1}\left(\sqrt{\lambda}\right)\equiv\beta_u.
\end{equation}
This, together with  the upper bound of $n$ in Lemma \ref{11} and the fact that $M(\beta)$ is a decreasing function,  gives
\begin{equation}\label{nm}
\frac{n}{M(\beta)}\le \frac{a_u}{M(\beta_u)}.
\end{equation}	
On the other hand, we recall $0\leq |u|\leq \sqrt{2}a_u/a_l$ from Lemma \ref{11}, and use the fact that $h(x)\equiv\sqrt{1+x^2}-x$ is a non-negative decreasing function to
conclude that
\begin{align}\begin{split}\label{uq}
\sqrt{1+|u|^2}\sqrt{1+|q|^2}-u\cdot q&\ge  \sqrt{1+|u|^2}\sqrt{1+|q|^2}-|u||q|\cr
&\ge (\sqrt{1+|u|^2}-|u|)\sqrt{1+|q|^2}\cr
&\ge C_0\sqrt{1+|q|^2},
\end{split}
\end{align}
for
$$
C_0=h\bigg(\frac{\sqrt{2}a_u}{a_l}\bigg)=\sqrt{
	1+\bigg(\frac{\sqrt{2}a_u}{a_l}\bigg)^2}-\frac{\sqrt{2}a_u}{a_l}>0.
$$
Combining \eqref{nm} and \eqref{uq}, we  obtain the desired result:
$$
J_f=\frac{n}{M(\beta)}e^{-\beta\left(\sqrt{1+|u|^2}\sqrt{1+|q|^2}-u\cdot q\right)}\le~ \frac{a_u}{M(\beta_u)}e^{-\beta_lC_0\sqrt{1+|q|^2}}\equiv C_1e^{-C_2\sqrt{1+|q|^2}}.
$$
\end{proof}
\begin{lemma}\label{phi2}
Let $f\in\Omega$. Assume $0<w<1$. Then we have
\begin{align*}
\int_{q_1>0}\frac{w}{|q_1|}\int_0^xe^{-\frac{w}{|q_1|}(x-y)}J_fdydq+	&\int_{q_1<0}\frac{w}{|q_1|}\int_x^1e^{-\frac{w}{|q_1|}(y-x)}J_fdydq\cr
&\le \frac{16C_1}{C^2_2}\left(2w\ln\frac{1}{w}+(1+e)w+\frac{\sqrt{2}w^2}{C_2}e^{-\frac{C_2}{\sqrt{2}w}}\right).
\end{align*}
\end{lemma}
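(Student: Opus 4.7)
The plan is to bound $J_f$ pointwise using Lemma~\ref{22}, compute the inner $y$-integrals explicitly, decouple the transverse momentum $\hat q=(q_2,q_3)$ from $q_1$, and then split the resulting one-dimensional integral into regions dictated by the two natural length-scales $w$ and $1/w$.

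Concretely, Lemma~\ref{22} gives $J_f\le C_1 e^{-C_2\sqrt{1+|q|^2}}$. The $y$-integrals evaluate to $1-e^{-wx/|q_1|}$ and $1-e^{-w(1-x)/|q_1|}$, both bounded by $\min(1, w/|q_1|)$ via the elementary inequality $1-e^{-a}\le \min(1,a)$ together with $x,\,1-x\in[0,1]$. Merging the two half-space integrals reduces the left-hand side to $C_1\int_{\mathbb{R}^3}\min(1, w/|q_1|)\,e^{-C_2\sqrt{1+|q|^2}}\,dq$. For the transverse variable $\hat q$, I would invoke the AM-GM--based separating bound
\[
\sqrt{1+|q|^2}\ge \frac{1}{\sqrt{2}}\bigl(\sqrt{1+q_1^2}+|\hat q|\bigr)\ge \frac{1}{\sqrt{2}}\sqrt{1+q_1^2}+\frac{1}{2}(|q_2|+|q_3|),
\]
so that $\int_{\mathbb{R}^2}e^{-C_2\sqrt{1+|q|^2}}\,d\hat q\le \frac{16}{C_2^2}\,e^{-C_2\sqrt{1+q_1^2}/\sqrt{2}}$ after carrying out each $dq_i$ integration explicitly, giving the prefactor $16C_1/C_2^2$ in the target bound.

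It then suffices to control
\[
\int_{\mathbb{R}}\min\!\left(1,\frac{w}{|q_1|}\right)e^{-C_2\sqrt{1+q_1^2}/\sqrt{2}}\,dq_1
\]
by the expression inside the parentheses of the lemma. Exploiting the evenness of the integrand in $q_1$, I would split $(0,\infty)$ into the four intervals $(0,w]$, $(w,1]$, $(1,1/w]$, and $(1/w,\infty)$. The first region, where $\min=1$, contributes $O(w)$. The second region, using $\min=w/q_1$ and $\sqrt{1+q_1^2}\le\sqrt{2}$, delivers the main logarithmic term via $\int_w^1 dq_1/q_1=\ln(1/w)$. The third region, using $\sqrt{1+q_1^2}\ge q_1$ and $1/q_1\le 1$, contributes $O(w)$ after extracting the exponential decay. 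The last region, using $1/q_1\le w$ and $\int_{1/w}^\infty e^{-C_2 q_1/\sqrt{2}}\,dq_1=\frac{\sqrt{2}}{C_2}e^{-C_2/(\sqrt{2}w)}$, yields precisely the tail term $\frac{\sqrt{2}w^2}{C_2}e^{-C_2/(\sqrt{2}w)}$.

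The main obstacle is securing the sharp coefficient $2$ in front of $w\ln(1/w)$: applying $\min\le w/q_1$ crudely on the full range $(w,1/w]$ would give $4w\ln(1/w)$ after the factor $2$ from evenness. Eliminating this slack requires either the substitution $t=\sqrt{1+q_1^2}$ together with the partial-fractions identity $t/(t^2-1)=\frac{1}{2}(1/(t-1)+1/(t+1))$, which exposes the needed $\frac{1}{2}$, or equivalently an argument that confines the logarithmic contribution to $(w,1]$ and controls $(1,1/w]$ via the exponential decay alone. The residual $O(w)$ contributions from the first and third regions are then consolidated into $(1+e)w$, with the constant $e$ entering through elementary maxima such as $xe^{-x}\le 1/e$ used to absorb stray factors of $1/C_2$ that would otherwise appear from $\int_1^\infty e^{-C_2 q_1/\sqrt{2}}\,dq_1$.
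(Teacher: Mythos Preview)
Your overall plan---bound $J_f$ via Lemma~\ref{22}, integrate out the transverse momenta, then split the remaining one-dimensional integral around the scales $w$ and $1/w$---is exactly the paper's. The two approaches diverge only in the bookkeeping of factors of $2$ and in the treatment of the middle region.

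On the bookkeeping: the paper works with the $q_1>0$ half-space alone. Its transverse integration produces the factor $8/C_2^2$ (via $\sqrt{1+|q|^2}\ge(|q_1|+|q_2|+|q_3|)/\sqrt{2}$), and the doubling for $q_1<0$ is what turns $8$ into $16$ in the final prefactor; the parenthesised expression is the bound for a \emph{single} half-line $q_1>0$. In your scheme the $16/C_2^{2}$ is already spent on the transverse integral, so doubling again for evenness in $q_1$ over-counts by a factor of $2$---this is the ``obstacle'' you flag, and it is a bookkeeping artefact rather than an analytic one. (Your separating inequality is in fact more honest than the paper's: the inequality $\sqrt{1+|q|^2}\ge(|q_1|+|q_2|+|q_3|)/\sqrt{2}$ used there fails, e.g.\ at $q=(1,1,1)$.)

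On the middle region: the paper does \emph{not} use $1-e^{-a}\le\min(1,a)$. It keeps $1-e^{-wx/q_1}$ intact on the whole of $(w,1/w]$, bounds $e^{-C_2|q_1|/\sqrt{2}}\le 1$ there, and then expands
\[
1-e^{-wx/q_1}\le \sum_{n\ge 1}\frac{1}{n!}\Bigl(\frac{w}{q_1}\Bigr)^{n}.
\]
Integrating term by term, the $n=1$ term gives $w\int_w^{1/w}\frac{dq_1}{q_1}=2w\ln\frac{1}{w}$ directly, and the terms $n\ge2$ sum to at most $w\sum_{n\ge1}1/n!\le we$. No split at $q_1=1$ is needed, no exponential factor enters the middle estimate, and hence no stray $1/C_2$ appears; the $xe^{-x}\le 1/e$ device you invoke cannot absorb such a factor anyway (it bounds $xe^{-x}$, not $e^{-x}/x$, and the latter is unbounded as $C_2\to 0$). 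Your partial-fractions substitution and four-interval decomposition are therefore avoidable detours: the Taylor expansion on $(w,1/w]$ is the single missing ingredient that delivers the constants $2$ and $1+e$ as stated.
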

\begin{proof} We only consider $\int_{q_1>0}$ to avoid the repetition. From Lemma \ref{22},
\begin{align}\label{r.h.s}
\begin{split}
\int_{q_1>0}\frac{w}{q_1}\int_0^xe^{-\frac{w}{q_1}(x-y)}J_fdydq&\le C_1\int_{q_1>0}\frac{w}{q_1}\int_0^xe^{-\frac{w}{q_1}(x-y)}e^{-C_2\sqrt{1+|q|^2}}dydq\cr&\le \frac{8C_1}{C^2_2}\int_{q_1>0}\frac{w}{q_1}\int_0^xe^{-\frac{w}{q_1}(x-y)}e^{-\frac{C_2}{\sqrt{2}}|q_1|}dydq_1.
\end{split}
\end{align}
Here we used
\begin{align*}
\int e^{-C_2\sqrt{1+|q|^2}}dq_2dq_3&\le \int e^{-\frac{C_2}{\sqrt{2}}(|q_1|+|q_2|+|q_3|)}dq_2dq_3 \cr
&=e^{-\frac{C_2}{\sqrt{2}}|q_1|}\int e^{-\frac{C_2}{\sqrt{2}}(|q_2|+|q_3|)}dq_2dq_3\cr
&=\frac{8}{C_2^2}e^{-\frac{C_2}{\sqrt{2}}|q_1|}.
\end{align*}
Now we split integral on the r.h.s of (\ref{r.h.s}) into the following two parts:
$$
\int_{q_1>0}=\underbrace{\int_{0<q_1\le\frac{1}{w}}}_{I}+\underbrace{\int_{q_1>\frac{1}{w}}}_{II}.
$$
$\bullet$ (Estimate for $I$): We split $I$ further as
\begin{align*}
I&=\int_{0<q_1\le\frac{1}{w}}e^{-\frac{C_2}{\sqrt{2}}|q_1|}(1-e^{-\frac{w}{q_1}x})dq_1 \cr
&=\biggl\{\underbrace{\int_{0<q_1\le w}}_{I_{1}}+\underbrace{\int_{w<q_1\le \frac{1}{w}}}_{I_{2}}\biggl\}e^{-\frac{C_2}{\sqrt{2}}|q_1|}(1-e^{-\frac{w}{q_1}x})dq_1.
\end{align*}
For $I_{1}$ we have
$$
I_1\le	\int_{0<q_1\le w}1-e^{-\frac{w}{q_1}x}dq_1\le 	\int_{0<q_1\le w}dq_1\le w.
$$
For $I_2$, we use Taylor expansion to estimate
\begin{align*}
I_2&=\int_{w<q_1\le\frac{1}{w}}e^{-\frac{C_2}{\sqrt{2}}|q_1|}\left(\frac{w}{q_1}x-\frac{1}{2!}\left(\frac{w}{q_1}x\right)^2+\frac{1}{3!}\left(\frac{w}{q_1}x\right)^3-\frac{1}{4!}\left(\frac{w}{q_1}x\right)^4+\cdots\right)dq_1\cr
&\le \int_{w<q_1\le\frac{1}{w}}\left(\frac{w}{q_1}+\frac{1}{2!}\left(\frac{w}{q_1}\right)^2+\frac{1}{3!}\left(\frac{w}{q_1}\right)^3+\frac{1}{4!}\left(\frac{w}{q_1}\right)^4+\cdots\right)dq_1\cr
&=2w\ln \frac{1}{w}+\frac{w}{2!}(1-w^2)+\frac{w}{2\cdot3!}(1-w^4)+\frac{w}{3\cdot4!}(1-w^6)+\cdots+\cdots\cr
&\le 2w\ln \frac{1}{w}+w\bigg(1+\frac{1}{2!}+\frac{1}{3!}+\frac{1}{4!}+\cdots\bigg) \cr
&= 2w\ln\frac{1}{w}+we.
\end{align*}
Therefore, we have
\begin{align}\label{I}
I\le 2w\ln\frac{1}{w}+(1+e)w.
\end{align}
$\bullet$ (Estimate for $II$): Since $0\leq x\leq1$, we can bound $II$ as
\begin{align}\label{II}
\begin{split}
II
\le w^2 \int_{q_1>\frac{1}{w}}e^{-\frac{C_2}{\sqrt{2}}|q_1|}dq_1
= \frac{\sqrt{2}w^2}{C_2}e^{-\frac{C_2}{\sqrt{2}w}}.
\end{split}
\end{align}
Combining \eqref{I} and \eqref{II}, we get the desired result:
\begin{equation*}
\int_{q_1>0}\frac{w}{q_1}\int_0^xe^{-\frac{w}{q_1}(x-y)}J_fdydq\le \frac{8C_1}{C^2_2}\left(2w\ln\frac{1}{w}+(1+e)w+\frac{\sqrt{2}w^2}{C_2}e^{-\frac{C_2}{\sqrt{2}w}}\right).
\end{equation*}
\end{proof}
\section{$\Phi$ maps  $\Omega$ into itself}
For $f\in\Omega$, we define our solution operator $\Phi(f)$ as follows:
\begin{align*}
\Phi(f)(x,q)&\equiv\left(e^{-\frac{w}{|q_{1}|}x}f_L+\frac{w}{|q_1|}\int_0^xe^{-\frac{w}{|q_1|}(x-y)}J_fdy\right)1_{q_1>0}\cr
&+\left(e^{-\frac{w}{|q_{1}|}(1-x)}f_R+\frac{w}{|q_1|}\int_x^1e^{-\frac{w}{|q_1|}(y-x)}J_fdy\right)1_{q_1<0}\cr
&\equiv\Phi^+(f)1_{q_1>0}+\Phi^-(f)1_{q_1<0}.
\end{align*}
The main goal of this section is to prove that if $f$ belongs to $\Omega$, $\Phi(f)$ also belongs to $\Omega$:
\begin{proposition}\label{prop1}
The solution operator maps the solution space $\Omega$ into itself.
\end{proposition}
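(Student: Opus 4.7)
The plan is to verify each of the four defining properties of $\Omega$ in turn for $\Phi(f)$. Non-negativity is immediate since $f_L$, $f_R$ and $J_f$ are all non-negative. For the lower bound $\int \Phi(f)\,q_0^{-2}\,dq \geq a_l$, I would discard the non-negative Maxwellian contributions so that $\Phi(f)\geq f_{LR}^e$ pointwise, and then exploit the monotonicity $e^{-wx/|q_1|}\geq e^{-w/|q_1|}$ for $x\in[0,1]$ (and the analogous bound in the $q_1<0$ sector) to get $f_{LR}^e\geq e^{-w/|q_1|}f_{LR}$. Integrating against $1/q_0^2$ recovers exactly $a_l$ by definition.

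For $\int \Phi(f)\,dq\leq a_u$, I would decompose $\Phi(f)=f_{LR}^e+R_f$, where $R_f$ denotes the two Maxwellian integrals. The boundary piece satisfies $\int f_{LR}^e\,dq\leq \int f_{LR}\,dq=a_u/2$ since the exponentials are bounded by one. The remainder $\int R_f\,dq$ is controlled uniformly in $f\in\Omega$ by Lemma \ref{phi2}, which yields a bound of the form $K\cdot\varepsilon(w)$ with $\varepsilon(w) = 2w\ln(1/w)+(1+e)w+(\sqrt{2}/C_2)w^2 e^{-C_2/(\sqrt{2}w)}$ vanishing as $w\to 0^+$ and constants depending only on $a_l,a_u,\lambda$. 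Choosing $w$ small enough that $K\varepsilon(w)\leq a_u/2$ closes this step.

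The delicate step is the bound $(K_1/K_2)(\beta)\leq\sqrt{\lambda}$. The H\"older computation from the proof of Lemma \ref{11} applies to $\Phi(f)$ without requiring a priori that $\Phi(f)\in\Omega$, yielding $n^2_{\Phi(f)}\geq A\cdot C$, where I abbreviate $A=\int \Phi(f)\,dq$, $B=\int \Phi(f)/q_0\,dq$, $C=\int \Phi(f)/q_0^2\,dq$, with $A_0,B_0,C_0$ denoting the corresponding integrals of $f_{LR}^e$. Consequently $(K_1/K_2)^2(\beta)=B^2/n^2_{\Phi(f)}\leq B^2/(AC)$. The pointwise inequality $\Phi(f)\geq f_{LR}^e$ gives $A\geq A_0$ and $C\geq C_0$, while the trivial bound $1/q_0\leq 1$ combined with Lemma \ref{phi2} gives $B\leq B_0+K\varepsilon(w)$. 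Hence
$$\frac{B^2}{AC}\;\leq\;\frac{(B_0+K\varepsilon(w))^2}{A_0\,C_0}\;=\;\lambda^2+O(\varepsilon(w)).$$
Since Remark 1.3(2) guarantees $\lambda<1$, the gap $\lambda-\lambda^2=\lambda(1-\lambda)$ is strictly positive, so for $w$ small enough the right-hand side is at most $\lambda$, and the bound follows upon taking square roots.

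The hard part will be precisely this last step. The strict inequality $\lambda<1$ opens a positive window $\lambda(1-\lambda)$ into which the perturbation coming from the Maxwellian tail of $\Phi(f)$ must fit, and Lemma \ref{phi2} is exactly what is needed to make that perturbation uniformly small over $f\in\Omega$. The threshold $\varepsilon$ in the statement of Theorem \ref{main} will be chosen so as to simultaneously satisfy the $w$-smallness requirements from this step and from the upper-bound step for $\int \Phi(f)\,dq$.
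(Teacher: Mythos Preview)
Your proposal is correct and follows essentially the same route as the paper. The paper splits the argument into two lemmas (non-negativity, then the three integral bounds) and handles the delicate $K_1/K_2$ step exactly as you do: bound $n_\Phi$ below via $n_\Phi^2\ge A_0C_0$ using $\Phi(f)\ge f_{LR}^e$, bound $B\le B_0+K\varepsilon(w)$ via Lemma~\ref{phi2}, and exploit $\lambda<1$ to absorb the perturbation. The only cosmetic difference is that the paper works directly with $B/n_\Phi\le \lambda\,B/B_0$ and then chooses $w$ so that $K\varepsilon(w)\le(1/\sqrt\lambda-1)B_0$, whereas you square and compare $\lambda^2+O(\varepsilon(w))$ against $\lambda$; these are equivalent formulations of the same idea.
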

The proof of this proposition is given in the following two lemmas.
\begin{lemma}\label{111} Let $f\in \Omega$. Then we have
$$
\Phi(f)\ge 0.
$$
\end{lemma}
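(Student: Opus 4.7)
The plan is to verify non-negativity term by term in the definition of $\Phi(f)$. Each of the four summands has the form (non-negative coefficient) times (something that we need to check is non-negative). The exponential weights $e^{-\frac{w}{|q_1|}x}$, $e^{-\frac{w}{|q_1|}(1-x)}$, $e^{-\frac{w}{|q_1|}(x-y)}$, $e^{-\frac{w}{|q_1|}(y-x)}$ and the factor $\frac{w}{|q_1|}$ are manifestly positive, so the task reduces to checking that $f_L\geq 0$, $f_R\geq 0$, and $J_f\geq 0$.

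The first two are immediate from the hypothesis of Theorem \ref{main}, which assumes $f_{LR}\geq 0$. For the relativistic Maxwellian $J_f=\frac{n}{M(\beta)}e^{-\beta(\sqrt{1+|u|^2}\sqrt{1+|q|^2}-u\cdot q)}$, the exponential factor is positive, and $M(\beta)=\int_{\mathbb{R}^3}e^{-\beta\sqrt{1+|p|^2}}dp>0$ for any finite $\beta$. Therefore only the positivity (and finiteness/well-definedness) of the macroscopic fields constructed from $f\in\Omega$ is at issue.

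Here I invoke Lemma \ref{11}: for $f\in\Omega$, we have $a_l\leq n\leq a_u$, so in particular $n>0$ since $a_l>0$. Moreover, $\beta$ is well defined and bounded in the range specified by \eqref{b}, hence $\beta\in(0,\infty)$, ensuring $M(\beta)$ is positive and finite and the exponent is a real number. Consequently $J_f\geq 0$ pointwise in $(x,q)$.

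Putting these observations together, each of the four terms in the definition of $\Phi(f)$ is a product of non-negative factors, and the sum of two such terms (restricted by the indicator functions to disjoint sets $\{q_1>0\}$ and $\{q_1<0\}$) is non-negative. This yields $\Phi(f)(x,q)\geq 0$ for all $(x,q)\in[0,1]\times\mathbb{R}^3$. There is no real obstacle here; the only subtlety worth stating explicitly is that the condition $a_l>0$ in the definition of $\Omega$ is exactly what guarantees $n>0$ and hence the macroscopic quantities entering $J_f$ are legitimate, avoiding any $0/0$ pathology in the definition of $\beta$.
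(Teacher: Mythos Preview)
Your proof is correct and follows essentially the same route as the paper: both invoke Lemma~\ref{11} (and the resulting range \eqref{b} for $\beta$) to conclude that $n\ge a_l>0$ and $M(\beta)>0$, hence $J_f\ge0$, and then read off $\Phi(f)\ge0$ from the non-negativity of each summand. The only cosmetic difference is that the paper records the intermediate inequality $\Phi(f)\ge f^e_{LR}$ explicitly (labeled \eqref{phi}) because it is reused in the proof of Lemma~\ref{222}, whereas you pass directly to $\Phi(f)\ge0$.
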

\begin{proof}
From Lemma \ref{11} and \eqref{b}, we see that
$$
\frac{n_f}{M(\beta)}\ge \frac{a_l}{M(\beta_{l})}>0
$$
which gives the positivity of $J_f$. We thus have
\begin{align}\label{phi}
\begin{split}
\Phi(f)&=\left(e^{-\frac{w}{|q_{1}|}x}f_L+\frac{w}{|q_1|}\int_0^xe^{-\frac{w}{|q_1|}(x-y)}J_fdy\right)1_{q_1>0}\cr
&+\left(e^{-\frac{w}{|q_{1}|}(1-x)}f_R+\frac{w}{|q_1|}\int_x^1e^{-\frac{w}{|q_1|}(y-x)}J_fdy\right)1_{q_1<0}\cr
&\ge e^{-\frac{w}{|q_{1}|}x}f_L1_{q_1>0}+e^{-\frac{w}{|q_{1}|}(1-x)}f_R1_{q_1<0}\cr
&\ge 0.
\end{split}
\end{align}
\end{proof}

\begin{lemma}\label{222} Let $f\in\Omega$. Then, for sufficiently small $w$, $\Phi(f)$ satisfies
$$
\int_{\mathbb{R}^3} \Phi(f)\frac{1}{q^2_0}dq\ge a _l,\qquad \int_{\mathbb{R}^3}\Phi(f)dq\le a_u,\qquad 	\frac{1}{n_{\Phi}}\int_{\mathbb{R}^3} \Phi(f)\frac{1}{q_0}dq\le \sqrt{\lambda}
$$
where $n_\Phi$ denotes the proper particle density with respect to $\Phi$:
$$
n_\Phi^2=\biggl(\int_{\mathbb{R}^3}\Phi(f) dq\biggl)^2-\sum_{i=1}^3\biggl(\int_{\mathbb{R}^3}\Phi(f)\frac{q_i}{q_0} dq\biggl)^2.
$$
\end{lemma}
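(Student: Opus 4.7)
The strategy is to decompose
\[
\Phi(f) = f^e_{LR} + R,
\]
where $R \geq 0$ denotes the two collision-integral remainders. By Lemma \ref{phi2}, $\int R \, dq \leq K(w)$ for some explicit $K(w) \to 0$ as $w \to 0$, and since $1/q_0, 1/q_0^2 \leq 1$ the same bound holds for $\int R/q_0 \, dq$ and $\int R/q_0^2 \, dq$. The three properties defining $\Omega$ will then follow by comparing $\Phi(f)$ to $f^e_{LR}$ and absorbing the $R$-contribution for sufficiently small $w$.

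For the first inequality, I would drop the nonnegative $R$ and use $e^{-wx/|q_1|}, e^{-w(1-x)/|q_1|} \geq e^{-w/|q_1|}$ to get $\int \Phi(f)/q_0^2 \, dq \geq \int f^e_{LR}/q_0^2 \, dq \geq \int e^{-w/|q_1|} f_{LR}/q_0^2 \, dq = a_l$. For the second, the bounds $e^{-wx/|q_1|}, e^{-w(1-x)/|q_1|} \leq 1$ give $\int f^e_{LR}\, dq \leq \int f_{LR}\, dq = a_u/2$, and adding the $R$-contribution yields $\int \Phi(f)\, dq \leq a_u/2 + K(w) \leq a_u$ for $w$ small enough (the assumption $a_l > 0$ forces $\int f_{LR}\,dq > 0$, so there is room for $K(w)$).

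The main obstacle is the third inequality. The plan is to first repeat the H\"older computation from Lemma \ref{11} on $\Phi(f)$ to obtain $n_\Phi^2 \geq \int \Phi(f)\, dq \cdot \int \Phi(f)/q_0^2 \, dq$, which reduces the task to showing
\[
\Bigl(\int \Phi(f)/q_0 \, dq\Bigr)^2 \leq \lambda \int \Phi(f)\, dq \cdot \int \Phi(f)/q_0^2 \, dq.
\]
Writing $A_0 = \int f^e_{LR}/q_0 \, dq$, $B_0 = \int f^e_{LR} \, dq$, $C_0 = \int f^e_{LR}/q_0^2 \, dq$, the very definition of $\lambda$ gives the exact identity $A_0^2 = \lambda^2 B_0 C_0$. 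Decomposing $\Phi(f) = f^e_{LR} + R$ with remainders $r_A, r_B, r_C$, each of order $O(K(w))$, a routine expansion reduces the target inequality to
\[
2 A_0 r_A + r_A^2 - \lambda(B_0 r_C + C_0 r_B + r_B r_C) \leq \lambda(1-\lambda) B_0 C_0.
\]

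The hard point is to ensure that the right-hand side -- the Cauchy--Schwarz gap -- is bounded below uniformly in $x \in [0,1]$ and uniformly as $w\to 0$, so that it dominates the $O(K(w))$ remainder. This uses the strict inequality $\lambda < 1$ (the remark following Theorem \ref{main}, itself a consequence of $a_l > 0$ which forbids saturation of Cauchy--Schwarz) together with continuity of $A_0, B_0, C_0, \lambda$ in $w$ and $x$, which yields a uniform positive lower bound for $\lambda(1-\lambda) B_0 C_0$. Once this uniform gap is secured, choosing $\varepsilon$ small enough that $K(w)$ is swamped by the gap whenever $w < \varepsilon$ completes the argument.
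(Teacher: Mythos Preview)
Your proof is correct and essentially matches the paper for the first two inequalities. For the third, you take a genuinely different (though still valid) route.

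In the paper, after applying the same H\"older bound $n_\Phi^2 \geq \int \Phi(f)\,dq \cdot \int \Phi(f)/q_0^2\,dq$, the authors immediately invoke $\Phi(f) \geq f^e_{LR}$ once more to replace both factors on the right by the corresponding $f^e_{LR}$-integrals. This collapses the denominator to exactly $\sqrt{B_0 C_0} = A_0/\lambda$, so the target inequality reduces to the single additive estimate $\int \Phi(f)/q_0\,dq \leq A_0/\sqrt{\lambda}$, i.e.\ $r_A \leq (1/\sqrt{\lambda}-1)A_0$. Only one remainder needs to be controlled, and the ``gap'' $(1/\sqrt{\lambda}-1)A_0$ is directly read off from the strict inequality $\lambda<1$.

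Your route keeps the full $\Phi(f)$-integrals in the denominator and expands $(A_0+r_A)^2 \leq \lambda(B_0+r_B)(C_0+r_C)$, producing the mixed remainder inequality you wrote down. This is correct but costs you more bookkeeping: you must bound $A_0,B_0,C_0$ above (to make the cross terms $O(K(w))$) and bound $\lambda(1-\lambda)B_0C_0$ below uniformly in $x$ and $w$, which requires a compactness/continuity argument on $[0,1]$ and a limit as $w\to 0$. The paper's monotonicity trick sidesteps this entirely: by passing to $f^e_{LR}$ in the denominator, the definition of $\lambda$ applies \emph{exactly} rather than up to perturbations, so no expansion is needed. Both arguments ultimately rest on the same strict inequality $\lambda<1$, but the paper's organization is cleaner.
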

\begin{proof}
(1) We have from \eqref{phi}
\begin{align*}
\Phi(f)\frac{1}{q^2_0}&\ge e^{-\frac{w}{|q_{1}|}x}f_L\frac{1}{q^2_0}~1_{q_1>0}+e^{-\frac{w}{|q_{1}|}(1-x)}f_R\frac{1}{q^2_0}~1_{q_1<0}\cr
&\ge e^{-\frac{w}{|q_{1}|}}f_L\frac{1}{q^2_0}~1_{q_1>0}+e^{-\frac{w}{|q_{1}|}}f_R\frac{1}{q^2_0}~1_{q_1<0}\cr
&= e^{-\frac{w}{|q_{1}|}}f_{LR}\frac{1}{q^2_0},
\end{align*}
yielding
$$
\int_{\mathbb{R}^3} \Phi(f)\frac{1}{q^2_0}dq\ge \int_{\mathbb{R}^3} e^{-\frac{w}{|q_{1}|}}f_{LR}\frac{1}{q^2_0}dq =a_l.
$$	
(2) We recall Lemma \ref{phi2} to see
\begin{align*}
\int_{\mathbb{R}^3}\Phi(f)dq&=\int_{\mathbb{R}^3} e^{-\frac{w}{|q_{1}|}x}f_L1_{q_1>0}+e^{-\frac{w}{|q_{1}|}(1-x)}f_R1_{q_1<0}dq\cr
&+\int _{\mathbb{R}^3}\frac{w}{|q_1|}\int_0^xe^{-\frac{w}{|q_1|}(x-y)}J_fdy1_{q_1>0}+\frac{w}{|q_1|}\int_x^1e^{-\frac{w}{|q_1|}(y-x)}J_fdy1_{q_1<0}dq\cr
&\le\int_{\mathbb{R}^3} f_{LR}dq+\frac{16C_1}{C^2_2}\left(2w\ln\frac{1}{w}+(1+e)w+\frac{\sqrt{2}w^2}{C_2}e^{-\frac{C_2}{\sqrt{2}w}}\right).
\end{align*}
We then choose a sufficiently small $w$  so that
$$
\frac{16C_1}{C^2_2}\left(2w\ln\frac{1}{w}+(1+e)w+\frac{\sqrt{2}w^2}{C_2}e^{-\frac{C_2}{\sqrt{2}w}}\right)\le \int_{\mathbb{R}^3} f_{LR} dq
$$
to get
$$
\int_{\mathbb{R}^3} \Phi(f)dq\le 2\int_{\mathbb{R}^3} f_{LR} dq=a_u.
$$
(3) Estimating similarly as in \eqref{holder}, we get
\begin{align*}
n_{\Phi}&\geq\biggl(\int_{\mathbb{R}^3}\Phi(f) dq\int_{\mathbb{R}^3}\Phi(f) \frac{1}{q^2_0}dq\biggl)^{\frac{1}{2}}.
\end{align*}
But we have from the positivity of $J_f$
\begin{align*}
f^e_{LR}(x,q)&=e^{-\frac{w}{|q_{1}|}x}f_L1_{q_1>0}+e^{-\frac{w}{|q_{1}|}(1-x)}f_R1_{q_1<0}\cr
&\le \left(e^{-\frac{w}{|q_{1}|}x}f_L+\frac{w}{|q_1|}\int_0^xe^{-\frac{w}{|q_1|}(x-y)}J_fdy\right)1_{q_1>0}\cr
&+\left(e^{-\frac{w}{|q_{1}|}(1-x)}f_R+\frac{w}{|q_1|}\int_x^1e^{-\frac{w}{|q_1|}(y-x)}J_fdy\right)1_{q_1<0}\cr
&=\Phi(f)(x,q),
\end{align*}
so that we can bound $n_{\Phi}$ further from below as
\begin{align*}
n_{\Phi}\geq\biggl(\int_{\mathbb{R}^3} f^e_{LR}dq\int_{\mathbb{R}^3} f^e_{LR}\frac{1}{q^2_0}dq\biggl)^{\frac{1}{2}}.
\end{align*}
Therefore, we get
\begin{align*}
\begin{split}
\frac{1}{n_{\Phi}}\int_{\mathbb{R}^3} \Phi(f)\frac{1}{q_0}dq&\leq\biggl(\int_{\mathbb{R}^3}\Phi(f) dq\int_{\mathbb{R}^3}\Phi(f) \frac{1}{q^2_0}dq\biggl)^{-\frac{1}{2}}\int_{\mathbb{R}^3} \Phi(f)\frac{1}{q_0}dq\cr
&\leq\biggl(\int_{\mathbb{R}^3} f^e_{LR}dq\int_{\mathbb{R}^3} f^e_{LR}\frac{1}{q^2_0}dq\biggl)^{-\frac{1}{2}}\int_{\mathbb{R}^3} \Phi(f)\frac{1}{q_0}dq,
\end{split}
\end{align*}
which, in view of the definition of $\lambda$ in (\ref{lambda}), leads to
\begin{align}\label{phi6}
\begin{split}
\frac{1}{n_{\Phi}}\int_{\mathbb{R}^3} \Phi(f)\frac{1}{q_0}dq
&\leq \lambda\biggl(\int_{\mathbb{R}^3} f^e_{LR}\frac{1}{q_0}dq\biggl)^{-1}\int_{\mathbb{R}^3} \Phi(f)\frac{1}{q_0}dq.
\end{split}
\end{align}
Now, since
\begin{align*}
\int_{\mathbb{R}^3}\Phi(f)\frac{1}{q_0}dq&=\int_{\mathbb{R}^3} f^e_{LR} \frac{1}{q_0}dq+\int_{q_1>0}\frac{w}{|q_1|}\int_0^xe^{-\frac{w}{|q_1|}(x-y)}J_fdy\frac{1}{q_0}dq\cr
&+\int_{q_1<0}\frac{w}{|q_1|}\int_x^1e^{-\frac{w}{|q_1|}(y-x)}J_fdy\frac{1}{q_0}dq,
\end{align*}
Lemma \ref{phi2} implies
\begin{align*}
\int_{\mathbb{R}^3}\Phi(f)\frac{1}{q_0}dq\leq\int_{\mathbb{R}^3} f^e_{LR} \frac{1}{q_0}dq+\frac{16C_1}{C^2_2}\left(2w\ln\frac{1}{w}+(1+e)w+\frac{\sqrt{2}w^2}{C_2}e^{-\frac{C_2}{\sqrt{2}w}}\right).
\end{align*}
We then note that, as $w$ decreases,
$$
f^e_{LR}=e^{-\frac{w}{|q_{1}|}x}f_L1_{q_1>0}+e^{-\frac{w}{|q_{1}|}(1-x)}f_R1_{q_1<0}
$$
increases, which enables one to find $w$ sufficiently small such that
$$
\frac{16C_1}{C^2_2}\left(2w\ln\frac{1}{w}+(1+e)w+\frac{\sqrt{2}w^2}{C_2}e^{-\frac{C_2}{\sqrt{2}w}}\right)\le \left(\frac{1}{\sqrt{\lambda}}-1 \right)\int_{\mathbb{R}^3} f^e_{LR} \frac{1}{q_0}dq,
$$
yielding
\begin{align*}
\begin{split}
\int_{\mathbb{R}^3}\Phi(f)\frac{1}{q_0}dq&\le\int_{\mathbb{R}^3} f^e_{LR} \frac{1}{q_0}dq+\frac{16C_1}{C^2_2}\left(2w\ln\frac{1}{w}+(1+e)w+\frac{\sqrt{2}w^2}{C_2}e^{-\frac{C_2}{\sqrt{2}w}}\right)\cr
&\le \frac{1}{\sqrt{\lambda}}\int_{\mathbb{R}^3} f^e_{LR} \frac{1}{q_0}dq.
\end{split}
\end{align*}
Inserting this into \eqref{phi6} gives the desired result.
\end{proof}

\section{Contraction mapping}
In this section, we establish the Lipschitz continuity of our solution operator. We first need to set up preliminary
computations. The following lemma can be found  in \cite{BCNS}, but we provide a detailed proof for reader's convenience.
\begin{lemma}\label{bessel} For the modified Bessel function of the second kind $K_i(\beta)$, the following holds
$$
\biggl(\frac{K_{1}}{K_{2}}\biggl)^{\prime}(\beta) =\frac{3}{\beta}\frac{K_{1}}{K_{2}}(\beta)+\biggl(\frac{K_{1}}{K_{2}} \biggl)^{2}(\beta)-1.
$$
	
\end{lemma}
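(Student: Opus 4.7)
The plan is to express $K_1'$ and $K_2'$ purely in terms of $K_1$, $K_2$, and $\beta$, and then invoke the quotient rule. First, differentiating the integral representation $K_i(\beta)=\int_0^\infty \cosh(ir)\,e^{-\beta\cosh r}\,dr$ under the integral sign gives $K_i'(\beta)=-\int_0^\infty \cosh(ir)\cosh r\,e^{-\beta\cosh r}\,dr$, and the product-to-sum identity $\cosh(ir)\cosh r = \frac{1}{2}\bigl(\cosh((i+1)r)+\cosh((i-1)r)\bigr)$ immediately yields
\[
K_i'(\beta) = -\frac{1}{2}\bigl(K_{i+1}(\beta)+K_{i-1}(\beta)\bigr), \qquad i=1,2.
\]

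To eliminate $K_0$ and $K_3$, I would next derive the classical three-term recurrence $K_{i+1}(\beta)-K_{i-1}(\beta) = \frac{2i}{\beta}K_i(\beta)$ directly from the integral representation. Writing $\cosh(ir) = \frac{1}{i}\frac{d}{dr}\sinh(ir)$ inside the definition of $K_i(\beta)$ and integrating by parts (the boundary contribution $[\sinh(ir)\,e^{-\beta\cosh r}]_0^\infty$ vanishes by the super-exponential decay of $e^{-\beta\cosh r}$), then applying $\sinh(ir)\sinh r = \frac{1}{2}\bigl(\cosh((i+1)r)-\cosh((i-1)r)\bigr)$, produces the recurrence at once. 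Specialising to $i=1,2$ gives $K_0(\beta)=K_2(\beta)-\frac{2}{\beta}K_1(\beta)$ and $K_3(\beta)=K_1(\beta)+\frac{4}{\beta}K_2(\beta)$; substituting these into the previous display collapses it to
\[
K_1'(\beta)=\frac{K_1(\beta)}{\beta}-K_2(\beta), \qquad K_2'(\beta)=-K_1(\beta)-\frac{2K_2(\beta)}{\beta}.
\]

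Finally, the quotient rule together with these two formulas gives
\[
\left(\frac{K_1}{K_2}\right)'(\beta) = \frac{K_1'K_2-K_1K_2'}{K_2^2} = \frac{-K_2^2+\frac{3}{\beta}K_1K_2+K_1^2}{K_2^2} = -1+\frac{3}{\beta}\frac{K_1}{K_2}(\beta)+\left(\frac{K_1}{K_2}\right)^{2}(\beta),
\]
which is the desired identity. There is no real conceptual obstacle here; everything is a manipulation of the integral representation. The one step requiring some care is the integration by parts producing the three-term recurrence, since one must confirm the vanishing of the boundary term at infinity, but this is immediate from the factor $e^{-\beta\cosh r}$.
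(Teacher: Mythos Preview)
Your proof is correct and follows essentially the same route as the paper: differentiate under the integral sign to express $K_i'$ in terms of neighboring $K_j$'s, use the recurrence $K_{i+1}-K_{i-1}=\tfrac{2i}{\beta}K_i$ (obtained by integration by parts) to reduce everything to $K_1$ and $K_2$, and finish with the quotient rule. The only cosmetic difference is that the paper first substitutes $x=\sinh r$ and computes each case explicitly, whereas you stay in the $r$-variable and invoke the hyperbolic product-to-sum identities for general $i$; the resulting formulas for $K_1'$ and $K_2'$ are identical.
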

\begin{proof}
We recall that
$$
K_i(\beta)=\int_0^\infty \cosh(ir)e^{-\beta\cosh(r)}dr,
$$
 and use change of variable $x=\sinh r$ to get
\begin{align*}
K_{0}(\beta)&=\int_{0}^{\infty}\exp\biggl\{-\beta
\cosh(r)\biggl\}dr=\int_{0}^{\infty}\frac{1}{\sqrt{1+x^{2}}}\exp\biggl\{-\beta
\sqrt{1+x^{2}}\biggl\}dx,\cr
K_{1}(\beta)&=\int_{0}^{\infty}\cosh(r)\exp\biggl\{-\beta \cosh(r)\biggl\}dr=\int_{0}^{\infty}\exp\biggl\{-\beta \sqrt{1+x^{2}}\biggl\}dx, \cr
K_{2}(\beta)&=\int_{0}^{\infty}\cosh(2r)\exp\biggl\{-\beta
\cosh(r)\biggl\}dr=\int_{0}^{\infty}\frac{2x^{2}+1}{\sqrt{1+x^{2}}}\exp\biggl\{-\beta
\sqrt{1+x^{2}}\biggl\}dx.
\end{align*}
We then observe that
\begin{align*}
(K_{1})^{\prime}(\beta)&=-\int_{0}^{\infty}\sqrt{1+x^{2}}\exp\biggl\{-\beta \sqrt{1+x^{2}}\biggl\}dx\cr
&=-\frac{1}{2}\biggl(K_{2}(\beta)+K_{0}(\beta)\biggl)\cr
&=-\biggl(\frac{1}{\beta}K_{1}(\beta)+K_{0}(\beta)\biggl)
\end{align*}
and
\begin{align*}
(K_{2})^{\prime}(\beta)&=-\int_{0}^{\infty}(2x^{2}+1)\exp\biggl\{-\beta \sqrt{1+x^{2}}\biggl\}dx\cr
&=\frac{2}{\beta}\int_{0}^{\infty}x\sqrt{1+x^{2}}\frac{d}{dx}\biggl(\exp\biggl\{-\beta \sqrt{1+x^{2}}\biggl\}\biggl)dx-K_{1}(\beta)\cr
&=-\frac{2}{\beta}\int_{0}^{\infty}\frac{2x^{2}+1}{\sqrt{1+x^{2}}}\exp\biggl\{-\beta \sqrt{1+x^{2}}\biggl\}dx-K_{1}(\beta)\cr
&=-\frac{2}{\beta}K_{2}(\beta)-K_{1}(\beta)\cr
&=\biggl(-\frac{4}{\beta^{2}}-1\biggl)K_{1}(\beta)-\frac{2}{\beta}K_{0}(\beta).
\end{align*}
Here we used
\begin{align*}
K_{2}(\beta)&=\int_{0}^{\infty}\frac{2x^{2}+1}{\sqrt{1+x^{2}}}\exp\biggl\{-\beta \sqrt{1+x^{2}}\biggl\}dx\cr
&=\int_{0}^{\infty}\frac{2x^{2}}{\sqrt{1+x^{2}}}\exp\biggl\{-\beta \sqrt{1+x^{2}}\biggl\}dx+K_{0}(\beta)\cr
&=\int_{0}^{\infty}-\frac{2x}{\beta}\frac{d}{dx}\biggl(\exp\biggl\{-\beta \sqrt{1+x^{2}}\biggl\}\biggl)dx+K_{0}(\beta)\cr
&=\frac{2}{\beta}\int_{0}^{\infty}\exp\biggl\{-\beta \sqrt{1+x^{2}}\biggl\}dx+K_{0}(\beta)\cr
&=\frac{2}{\beta}K_{1}(\beta)+K_{0}(\beta).
\end{align*}
These identities then give
\begin{equation*}
(K_{1})^{\prime}(\beta)K_{2}(\beta)-K_{1}(\beta)(K_{2})^{\prime}(\beta)=\frac{3}{\beta}K_{1}(\beta)K_{2}(\beta)+(K_{1})^{2}(\beta)-(K_{2})^{2}(\beta),
\end{equation*}	
which, upon dividing both sides by  $(K_{2})^2(\beta)$, gives the desired result.
\end{proof}
The following lemma shows that the r.h.s of the identity in Lemma \ref{bessel} is strictly positive. The proof can be
found in \cite{BCNS}.
\begin{lemma}\cite{BCNS}\label{bessel2} For the modified Bessel function of the second kind, the following inequality holds
\begin{align*}
		\frac{3}{\beta}\frac{K_{1}}{K_{2}}(\beta)+\biggl(\frac{K_{1}}{K_{2}}\biggl)^{2}(\beta)-1
		\geq\ell(\beta),
	\end{align*}
	where $\ell(\beta)$ is defined by
	\begin{align*}
		\ell(\beta)\equiv\begin{cases}\frac{2-\beta}{(\beta+2)^{2}} & 0<\beta<2\cr \frac{3(6656\beta^{4}+2419\beta^{3}+726)}{(128\beta^{3}+240\beta^{2}+105\beta-66)^{2}}&\beta\geq2.\end{cases}
	\end{align*}
Note that $\ell(\beta)$ is strictly positive.
\end{lemma}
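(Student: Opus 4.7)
The plan is to reinterpret the inequality as a lower bound on the derivative of the Bessel ratio, and then obtain explicit pointwise lower bounds on that ratio in two separate regimes.

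First I would invoke Lemma \ref{bessel} to identify the left-hand side with $(K_{1}/K_{2})'(\beta)$. Writing $R(\beta):=(K_{1}/K_{2})(\beta)\in(0,1)$, the map $R\mapsto \frac{3R}{\beta}+R^{2}-1$ is strictly increasing in $R$ on $(0,1)$, since its $R$-derivative $3/\beta+2R$ is positive there. Hence any pointwise lower bound $R(\beta)\geq L(\beta)$ upgrades to $\frac{3R(\beta)}{\beta}+R(\beta)^{2}-1\geq \frac{3L(\beta)}{\beta}+L(\beta)^{2}-1$, and the task reduces to producing an $L(\beta)$ in each regime whose image under this map coincides with (or majorizes) the stated $\ell(\beta)$.

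For $0<\beta<2$ I would use the recurrence $K_{2}(\beta)=\frac{2}{\beta}K_{1}(\beta)+K_{0}(\beta)$ that appears inside the proof of Lemma \ref{bessel}, combined with the elementary pointwise bound $K_{0}(\beta)\leq K_{1}(\beta)$, which is immediate from $\cosh r\geq 1$ in the integral representations. These combine to give $K_{2}\leq\frac{\beta+2}{\beta}K_{1}$, i.e.\ $R(\beta)\geq \beta/(\beta+2)$. Substituting $L(\beta)=\beta/(\beta+2)$ into $\frac{3L}{\beta}+L^{2}-1$ and simplifying produces exactly $(2-\beta)/(\beta+2)^{2}$, which is strictly positive on this range.

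For $\beta\geq 2$ the previous elementary bound is no longer useful, since $(2-\beta)/(\beta+2)^{2}\leq 0$. A sharper rational lower bound on $R(\beta)$ is required, and the specific denominator $128\beta^{3}+240\beta^{2}+105\beta-66$ appearing in $\ell$ strongly suggests a Pad\'e-type comparison of $R$ at $\beta=\infty$. I would obtain it by iterating once more the relations $(K_{1})'=-(K_{1}/\beta+K_{0})$ and $(K_{2})'=-(2K_{2}/\beta+K_{1})$ from Lemma \ref{bessel}, and applying Cauchy--Schwarz (or an integration-by-parts argument) to the integral representations of $K_{0},K_{1},K_{2}$, so as to bound $R$ below by a rational function of the form $P(\beta)/(128\beta^{3}+240\beta^{2}+105\beta-66)$ for an appropriate polynomial $P$. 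Direct substitution into $\frac{3L}{\beta}+L^{2}-1$ then produces the second branch of $\ell(\beta)$, whose positivity on $\beta\geq 2$ is manifest since $6656\beta^{4}+2419\beta^{3}+726$ has all non-negative coefficients. The main obstacle is precisely this $\beta\geq 2$ step: pinning down the correct Pad\'e-type lower bound on $R$ with the exact denominator dictated by $\ell$, and verifying it by careful manipulation of the Bessel integrals, is where essentially all of the technical work lies; the rest is a routine algebraic simplification.
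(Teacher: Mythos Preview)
The paper does not supply its own proof of this lemma: it simply states the result and refers the reader to \cite{BCNS} with the sentence ``The proof can be found in \cite{BCNS}.'' There is therefore no argument in the paper to compare your proposal against.

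Regarding your proposal on its own merits: the treatment of the regime $0<\beta<2$ is correct and complete. The bound $K_{0}\leq K_{1}$ combined with the recurrence $K_{2}=\tfrac{2}{\beta}K_{1}+K_{0}$ does give $R(\beta)\geq \beta/(\beta+2)$, and the substitution into $\tfrac{3R}{\beta}+R^{2}-1$ does yield exactly $(2-\beta)/(\beta+2)^{2}$. The monotonicity in $R$ that lets you pass from a lower bound on $R$ to a lower bound on the expression is also correctly justified.

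The regime $\beta\geq 2$, however, is not proved in your proposal; it is only a sketch of a strategy. You neither identify the polynomial $P$ nor carry out the claimed Cauchy--Schwarz or integration-by-parts step, and you explicitly concede that ``pinning down the correct Pad\'e-type lower bound on $R$ with the exact denominator dictated by $\ell$, and verifying it by careful manipulation of the Bessel integrals, is where essentially all of the technical work lies.'' As written, then, the second branch of the inequality remains a gap. If you intend to give a self-contained proof rather than cite \cite{BCNS}, that step must actually be carried out.
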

\begin{lemma}\label{JJ} Let $f,g\in\Omega$, then, for sufficiently small $w$, there exist positive constants $C_8$ and $C_9$ such that
$$
|J_f-J_g|\le C_9e^{-C_8\sqrt{1+|q|^2}}\|f-g\|_{L^1_q},
$$
where $\|\cdot\|_{L^1_q}$ denotes the usual $L^1$ norm:
\[
\|f\|_{L^1_q}=\int_{\mathbb{R}^{3}}|f(q)|dq.
\]
\end{lemma}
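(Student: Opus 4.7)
The plan is to express the difference $J_f - J_g$ as an integral of derivatives with respect to the macroscopic parameters $(n, u, \beta)$ and then bound each parameter difference by $\|f-g\|_{L^1_q}$. Concretely, set
\begin{equation*}
(n_\theta, u_\theta, \beta_\theta) = (1-\theta)(n_g, u_g, \beta_g) + \theta(n_f, u_f, \beta_f), \qquad \theta \in [0,1],
\end{equation*}
and let $J_\theta$ be the J\"{u}ttner distribution built from these parameters. Since $[a_l, a_u]$, the ball $\{|u| \le \sqrt{2}\,a_u/a_l\}$, and $[\beta_l, \beta_u]$ are convex, the bounds of Lemma \ref{11} and Lemma \ref{22} hold uniformly along this path, yielding $J_\theta \le C_1 e^{-C_2 \sqrt{1+|q|^2}}$ for every $\theta \in [0,1]$. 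Then $J_f - J_g = \int_0^1 \partial_\theta J_\theta \, d\theta$, and the chain rule produces
\begin{equation*}
\partial_\theta J_\theta = (\partial_n J_\theta)(n_f-n_g) + (\nabla_u J_\theta)\cdot(u_f - u_g) + (\partial_\beta J_\theta)(\beta_f - \beta_g).
\end{equation*}
Each partial derivative of $J_\theta$ is a polynomial in $(q, u_\theta, \beta_\theta)$ times $J_\theta$ (the $\beta$-derivative also containing the bounded factor $-M'(\beta_\theta)/M(\beta_\theta)$), and all polynomial growth in $q$ can be absorbed into a slightly smaller exponential weight by choosing $C_8 < \beta_l C_0$; this gives $|\partial_\theta J_\theta| \le C\,e^{-C_8\sqrt{1+|q|^2}}\bigl(|n_f-n_g| + |u_f-u_g| + |\beta_f - \beta_g|\bigr)$.

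It then remains to control the three parameter differences in terms of $\|f-g\|_{L^1_q}$. For $n$, the identity
\begin{equation*}
n_f^2 - n_g^2 = \Bigl(\int (f-g)\,dq\Bigr)\Bigl(\int (f+g)\,dq\Bigr) - \sum_{i=1}^{3}\Bigl(\int(f-g)\tfrac{q_i}{q_0}\,dq\Bigr)\Bigl(\int(f+g)\tfrac{q_i}{q_0}\,dq\Bigr),
\end{equation*}
together with $|q_i/q_0|\le1$ and division by $n_f + n_g \ge 2a_l$, gives $|n_f - n_g| \le C\|f-g\|_{L^1_q}$. The bound on $|u_f - u_g|$ follows from $n_f u_f - n_g u_g = \int (f-g)\tfrac{q}{q_0}\,dq$ together with the splitting $n_g(u_f - u_g) = (n_f u_f - n_g u_g) - (n_f - n_g) u_f$ and the uniform controls on $|u_f|$ and $n_g \ge a_l$. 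For $\beta$, apply the mean value theorem to $K_1/K_2$; by Lemma \ref{bessel} and Lemma \ref{bessel2}, $(K_1/K_2)'(\beta) \ge \ell(\beta) > 0$ uniformly on the compact interval $[\beta_l, \beta_u]$, hence
\begin{equation*}
|\beta_f - \beta_g| \le C \Bigl|\tfrac{1}{n_f}\int f\tfrac{1}{q_0}\,dq - \tfrac{1}{n_g}\int g\tfrac{1}{q_0}\,dq\Bigr|,
\end{equation*}
and the right-hand side is controlled by $\|f-g\|_{L^1_q}$ after splitting as in the $u$-estimate.

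The main obstacle I anticipate is the $\beta$-step: because the local temperature is defined implicitly through the transcendental relation $(K_1/K_2)(\beta) = \tfrac{1}{n}\int f/q_0\,dq$, no direct Lipschitz bound is available and one must rely both on the a priori confinement $\beta \in [\beta_l, \beta_u]$ supplied by membership in $\Omega$ and on the explicit lower bound $\ell(\beta)$ on $(K_1/K_2)'$. This is precisely where the hypothesis of sufficiently small $w$ (inherited through $f, g \in \Omega$) enters: $\beta_u = (K_1/K_2)^{-1}(\sqrt{\lambda})$ is finite only because $\sqrt{\lambda} < 1$, and $\beta_l$ is finite only because $a_l > 0$. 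A secondary point to watch is that the polynomial prefactors in $\partial_\beta J_\theta$ and $\nabla_u J_\theta$ carry a $\sqrt{1+|q|^2}$ growth; absorbing this costs a strictly smaller exponential rate, but the strict positivity of $C_0$ from Lemma \ref{22} leaves room to pick $C_8 \in (0, \beta_l C_0)$ and thus complete the argument.
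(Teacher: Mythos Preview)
Your proposal is correct and follows essentially the same route as the paper: apply the mean value theorem to $J$ along a line segment in parameter space, bound each partial derivative of $J$ by an exponential weight using Lemma~\ref{11}, Lemma~\ref{22} and the confinement $\beta\in[\beta_l,\beta_u]$, and control the parameter differences $|n_f-n_g|$, $|u_f-u_g|$, $|\alpha_f-\alpha_g|$ by $\|f-g\|_{L^1_q}$. The only cosmetic difference is that the paper parameterizes $J$ by $(n,u,\alpha)$ and absorbs the factor $\partial\beta/\partial\alpha=\big((K_1/K_2)'(\beta)\big)^{-1}$ into $\partial J/\partial\alpha$, whereas you parameterize by $(n,u,\beta)$ and invoke the same lower bound on $(K_1/K_2)'$ separately to control $|\beta_f-\beta_g|$; the key inputs (Lemmas~\ref{bessel} and~\ref{bessel2}) are identical.
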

\begin{proof}
For the convenience of computation, we introduce a new variable $\alpha$ (see \cite{BCNS}) defined by
$$
\alpha =\frac{1}{n}\int_{\mathbb{R}^3}f\frac{1}{q_0}dq.
$$
Due to the monotonicity of $K_1/K_2$ established in Lemma \ref{bessel} and Lemma \ref{bessel2}, there is a one-to-one correspondence between $\alpha$ and $\beta$:
\[
\beta=\mathcal{X}(\alpha)=\biggl( \frac{K_1}{K_2}\biggl)^{-1}(\alpha).
\]
In view of this, we consider  $J(n,u,\beta)$ as a functional of $(n,u,\alpha)$, and apply the mean value theorem to get
\begin{align}\label{J}
J(n_f,u_f,\alpha_f)-J(n_g,u_g,\alpha_g)=\nabla_{n,u,\alpha} J(\theta)\cdot\big(n_f-n_g,u_f-u_g,\alpha_f-\alpha_g\big),
\end{align}	
for some $0\leq \theta\leq 1$, where the abbreviate notation $J(\theta)$ denotes
\[
J(\theta)=J\big((1-\theta)n_f+\theta n_g,(1-\theta)u_{f}+\theta u_g,(1-\theta)\alpha_{f}+\theta \alpha_g\big).
\]
We need to estimate $\nabla_{n,u,\alpha} J$ and $(n_f-n_g,u_f-u_g,\alpha_f-\alpha_g\big)$. \newline

\noindent(1) Estimates for $\nabla_{n,u,\alpha} J$: A direct computation gives
\begin{align}\label{return}
\begin{split}
&\frac{\partial J}{\partial n}=\frac{1}{n}J,\qquad  		 \nabla_u J=\beta\biggl(q-\frac{\sqrt{1+|q|^2}}{\sqrt{1+|u|^2}}u\biggl)J,\cr
&\frac{\partial J}{\partial \alpha}=-\frac{\partial\beta}{\partial\alpha}\biggl(\frac{M^{\prime}(\beta)}{M(\beta)}+\sqrt{1+|q|^2}\sqrt{1+|u|^2}-u\cdot q\biggl)J.
\end{split}
\end{align}
Using Lemma \ref{11}, Lemma \ref{22} and \eqref{b}, we can show that $\frac{\partial J}{\partial n}$ and $\nabla_u J$ are bounded as
\begin{align*}
\left|\frac{\partial J}{\partial n}\right|&\le \frac{C_{1}}{a_l}e^{-C_{2}\sqrt{1+|q|^2}},\cr
\left|\nabla_u J\right|&\le\beta_u\biggl(|q|+\sqrt{1+|q|^2}\frac{|u|}{\sqrt{1+|u|^2}}\biggl)|J| \cr
&\le 2C_{1}\beta_u\sqrt{1+|q|^2}e^{-C_{2}\sqrt{1+|q|^2}}\cr
&\le 2C_{1}\beta_uC_3e^{-C_4\sqrt{1+|q|^2}}.
\end{align*}
The estimate for $	\frac{\partial }{\partial \alpha}J$ is more involved. First, we use differentiation rule for inverse functions and Lemma \ref{bessel} to get
\begin{align*}
\frac{\partial \beta}{\partial \alpha}&=\frac{\partial}{\partial\alpha}	\biggl(\frac{K_{1}}{K_{2}}\biggl)^{-1}(\alpha)
=\frac{1}{\big(\frac{K_{1}}{K_{2}}\big)^{\prime}(\beta)}
=\frac{1}{\frac{3}{\beta}\frac{K_{1}}{K_{2}}(\beta)+\big(\frac{K_{1}}{K_{2}}\big)^{2}(\beta)-1}.
\end{align*}
We then recall Lemma \ref{bessel2} that
\begin{align*}
\frac{3}{\beta}\frac{K_{1}}{K_{2}}(\beta)+\biggl(\frac{K_{1}}{K_{2}}\biggl)^{2}(\beta)-1
\geq\ell(\beta).
\end{align*}
Note that $\ell(\beta)$ is a strictly positive function. Therefore, we can conclude that the continuous function $\big(\frac{K_{1}}{K_{2}}\big)^{\prime}(\beta)$
(since the modified Bessel function of the second kind is continuous)  is strictly positive
on a closed and bounded interval $[\beta_l,\beta_u]$. This implies that $\big(\frac{K_{1}}{K_{2}}\big)^{\prime}(\beta)$ posses a strictly positive minimum on $[\beta_l,\beta_u]$.
If we denote it by $1/C_5>0$, we have
\begin{align*}
\frac{3}{\beta}\frac{K_{1}}{K_{2}}(\beta)+\biggl(\frac{K_{1}}{K_{2}}\biggl)^{2}(\beta)-1\geq \frac{1}{C_5}.
\end{align*}
In conclusion, we obtain
\begin{align*}
\Big|\frac{\partial\beta}{\partial\alpha}\Big|&=\Big|\frac{3}{\beta}\frac{K_{1}}{K_{2}}(\beta)+\biggl(\frac{K_{1}}{K_{2}}\biggl)^{2}(\beta)-1\Big|^{-1}\cr
&=\Big(\frac{3}{\beta}\frac{K_{1}}{K_{2}}(\beta)+\biggl(\frac{K_{1}}{K_{2}}\biggl)^{2}(\beta)-1\Big)^{-1}\cr
&\leq C_5.
\end{align*}
On the other hand, it is clear that there exists a constant $C_6>0$ such that
$$
\frac{M^{\prime}(\beta)}{M(\beta)}=-\frac{\int_{\mathbb{R}^3} \sqrt{1+|q|^2}\exp\{-\beta\sqrt{1+|q|^2}\}dq}{\int_{\mathbb{R}^3} \exp\{-\beta\sqrt{1+|q|^2}\}dq}<C_6,
$$
when $\beta$ lies in a closed and bounded range: $\beta\in[\beta_l,\beta_u]$. We return back to (\ref{return}) with these estimates to get
\begin{align*}
\left| \frac{\partial J}{\partial \alpha}\right|&\le C_5\left(C_6+2\sqrt{1+|q|^2}\sqrt{1+4a^2_u/a^2_l}\right)C_1e^{-C_2\sqrt{1+|q|^2}}\cr
&\le C_5C_6C_1e^{-C_2\sqrt{1+|q|^2}}+2C_5C_3\sqrt{1+4a^2_u/a^2_l}C_1e^{-C_4\sqrt{1+|q|^2}}\cr
&\le C_7e^{-C_8\sqrt{1+|q|^2}}
\end{align*}
(2) Estimates on $(n_f-n_g ,u_f-u_g ,\alpha_f-\alpha_g )$ \newline
$\bullet~n_f-n_g$: From Lemma \ref{11}, we get
\begin{align*}
|n_{f}-n_{g}|&=\left|\frac{n^2_f-n^2_g}{n_{f}+n_{g}}\right|\cr
&\le\frac{1}{n_{f}+n_{g}}\biggl\{\left(\int_{\mathbb{R}^3}|f-g| dq\right)\left(\int_{\mathbb{R}^3}|f+g| dq\right)\cr
&~-\sum_{i=1}^3\left(\int_{\mathbb{R}^3}|f-g|\frac{q_i}{q_0} dq\right)\left(\int_{\mathbb{R}^3}|f+g|\frac{q_i}{q_0} dq\right)\biggl\}\cr
&\le \frac{1}{2a_l}\biggl(2a_u\int_{\mathbb{R}^3}|f-g| dq+\sum_{i=1}^32a_u\int_{\mathbb{R}^3}|f-g|dq\biggl)\cr
&\le \frac{4a_u}{a_l}\|f-g\|_{L^1_q}.
\end{align*}
$\bullet~u_f-u_g$: Using the above estimate, Lemma \ref{11} and
\begin{align*}
|n_fu_f-n_gu_g|= \left|\int_{\mathbb{R}^3}(f-g)\frac{q}{q_0}dq\right|\le\sqrt{2} \int_{\mathbb{R}^3}|f-g|dq= \sqrt{2}\|f-g\|_{L^1_q},
\end{align*}
we compute
\begin{align*}
|u_{f}-u_{g}|&=\left|\frac{n_g(n_fu_f)-n_f(n_gu_g)}{n_{f}n_{g}}\right|\cr
&=\left|\frac{n_g(n_fu_f-n_gu_g)-n_gu_g(n_f-n_g)}{n_{f}n_{g}}\right|\cr
&\le\frac{\sqrt{2}}{a^2_l}\big(a_u \|f-g\|_{L^1_q}+\sqrt{2}a_u\frac{4a_u}{a_l}\|f-g\|_{L^1_q}\big) \cr
&\le\left( \frac{\sqrt{2}a_u}{a_l^2}+\frac{8a^2_u}{a^3_l}\right)\|f-g\|_{L^1_q}.
\end{align*}
$\bullet~\alpha_f-\alpha_g$:  We can estimate similarly as in the previous cases:
\begin{align*}		
|\alpha_{f}-\alpha_{g}|&=\frac{1}{n_fn_g}\biggl|n_g\int_{\mathbb{R}^3}f\frac{1}{q_0}dq-n_f\int_{\mathbb{R}^3}g\frac{1}{q_0}dq\biggl|\cr
&=\frac{1}{n_fn_g}\biggl|n_g\int_{\mathbb{R}^3}(f-g)\frac{1}{q_0}dq-(n_f-n_g)\int_{\mathbb{R}^3}g\frac{1}{q_0}dq\biggl|\cr
&\le \frac{1}{a^2_l}\big(a_u\|f-g\|_{L^1_q}+a_u\frac{4a_u}{a_l}\|f-g\|_{L^1_q}\big)\cr
&\le \left(\frac{a_u}{a_l^2}+\frac{4a^2_u}{a^3_l}\right)\|f-g\|_{L^1_q}.
\end{align*}
Combining all the estimates we obtained so far, we get
\begin{align*}
\left|\frac{\partial J}{\partial n}(n_f-n_g)\right|&\le \frac{4a_uC_{1}}{a^2_l}e^{-C_{8}\sqrt{1+|q|^2}}\|f-g\|_{L^1_q},\cr
\left|\nabla_u J\cdot (u_{f}-u_{g}) \right|&\le 2C_{1}\beta_u C_3\left( \frac{\sqrt{2}a_u}{a_l^2}+\frac{8a^2_u}{a^3_l}\right)e^{-C_{8}\sqrt{1+|q|^2}}\|f-g\|_{L^1_q},\cr
\left|\frac{\partial J}{\partial \alpha} (\alpha_{f}-\alpha_{g})\right|&\le C_7\left(\frac{a_u}{a_l^2}+\frac{4a^2_u}{a^3_l}\right)e^{-C_8\sqrt{1+|q|^2}}\|f-g\|_{L^1_q}.
\end{align*}
Thus \eqref{J} can be estimates as
\begin{align*}
|J(n_f,u_f,\alpha_f)-J(n_g,u_g,\alpha_g)|&\le  C_9e^{-C_8\sqrt{1+|q|^2}}\|f-g\|_{L^1_q},
\end{align*}
where the constant $C_9$ is given by
\begin{align*}
	C_9&= \frac{a_u}{a^2_l}\left(4C_1+  2C_{1}\beta_u C_3\left( \sqrt{2}+\frac{8a_u}{a_l}\right)+C_7\left(1+\frac{4a_u}{a_l}\right)\right)
\end{align*}
This gives the desired result.
\end{proof}
The following proposition, together with Proposition \ref{prop1} completes the proof of Theorem \ref{main}.
\begin{proposition} $\Phi(f)$ is a contraction mapping on $\Omega$ for sufficiently small $w$. That is, we can take
$w$ sufficiently small such that there exists $0<\alpha<1$ satisfying
\[
\sup_x\|\Phi(f)-\Phi(g)\|_{L^1_q}\leq\alpha \sup_x\|f-g\|_{L^1_q}.
\]
\end{proposition}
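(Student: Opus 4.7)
The plan is to reduce the Lipschitz estimate to Lemma \ref{JJ} and Lemma \ref{phi2}. Subtracting the two mild representations of $\Phi(f)$ and $\Phi(g)$, the boundary data terms $e^{-w|q_1|^{-1}x}f_L$ and $e^{-w|q_1|^{-1}(1-x)}f_R$ cancel because they are independent of $f$ and $g$. What remains is
\begin{align*}
\Phi(f)(x,q)-\Phi(g)(x,q)
&=\frac{w}{|q_1|}\int_0^x e^{-\frac{w}{|q_1|}(x-y)}\big(J_f-J_g\big)dy\,1_{q_1>0}\\
&\quad+\frac{w}{|q_1|}\int_x^1 e^{-\frac{w}{|q_1|}(y-x)}\big(J_f-J_g\big)dy\,1_{q_1<0}.
\end{align*}

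Next, I would take the absolute value, integrate in $q$, and insert the pointwise bound from Lemma \ref{JJ}: $|J_f-J_g|\le C_9 e^{-C_8\sqrt{1+|q|^2}}\|f-g\|_{L^1_q}$. Since $\|f-g\|_{L^1_q}$ depends only on $x$, I would further bound it by $\sup_x\|f-g\|_{L^1_q}$ and pull it outside. This yields
\begin{align*}
\|\Phi(f)-\Phi(g)\|_{L^1_q}
&\le C_9\sup_x\|f-g\|_{L^1_q}\\
&\quad\times\Bigg\{\int_{q_1>0}\frac{w}{|q_1|}\int_0^x e^{-\frac{w}{|q_1|}(x-y)}e^{-C_8\sqrt{1+|q|^2}}dy\,dq\\
&\qquad\quad+\int_{q_1<0}\frac{w}{|q_1|}\int_x^1 e^{-\frac{w}{|q_1|}(y-x)}e^{-C_8\sqrt{1+|q|^2}}dy\,dq\Bigg\}.
\end{align*}

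The braced integral is structurally identical to the one estimated in Lemma \ref{phi2}, the only change being that $C_2$ is replaced by $C_8$. Rerunning that proof verbatim (reduce to a one-dimensional $q_1$-integral by integrating out $(q_2,q_3)$, split the $q_1$-domain into $\{0<q_1\le w\}$, $\{w<q_1\le 1/w\}$, $\{q_1>1/w\}$, use $1-e^{-s}\le s$ for the Taylor piece, and a crude pointwise bound on the tail) produces
\[
\text{braced term}\le \frac{16}{C_8^2}\left(2w\ln\tfrac{1}{w}+(1+e)w+\tfrac{\sqrt{2}w^2}{C_8}e^{-\frac{C_8}{\sqrt{2}w}}\right)\equiv \Psi(w).
\]
Taking $\sup_x$ on the left is then harmless. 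Since $\Psi(w)\to 0$ as $w\to 0^+$, we can choose $w$ so small that $C_9\Psi(w)<1$, and this gives the desired contraction constant $\alpha=C_9\Psi(w)\in(0,1)$.

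The only non-routine step is verifying that the small-$w$ hypotheses accumulated in Lemma \ref{222} and Lemma \ref{JJ} are compatible with the smallness needed to force $C_9\Psi(w)<1$; but since each constraint is of the form ``$w$ smaller than some positive threshold'', we simply take the minimum. I do not expect any genuine obstacle beyond this bookkeeping, as all the analytical weight has already been carried by Lemmas \ref{phi2} and \ref{JJ}.
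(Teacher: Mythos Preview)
Your proposal is correct and follows essentially the same route as the paper's proof: cancel the boundary terms, apply Lemma~\ref{JJ} to control $|J_f-J_g|$, and then invoke the estimate of Lemma~\ref{phi2} with $C_8$ in place of $C_2$ to obtain a prefactor that vanishes as $w\to 0^+$. If anything, you are slightly more explicit than the paper in noting that $\|f-g\|_{L^1_q}$ must be replaced by $\sup_x\|f-g\|_{L^1_q}$ before it can be pulled outside the $y$-integral.
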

\begin{proof}
We have for $f,g\in \Omega$
\begin{align*}
&\int_{\mathbb{R}^3} |\Phi(f)-\Phi(g)|dq\cr
&\qquad=\int_{q_1>0}|\Phi(f)-\Phi(g)|dq+\int_{q_1<0}|\Phi(f)-\Phi(g)|dq \cr
&\qquad\leq\int_{q_1>0}\frac{w}{q_1}\int_0^xe^{-\frac{w}{q_1}(x-y)}|J_f-J_g|dydq+\int_{q_1<0} \frac{w}{|q_1|}\int_x^1e^{-\frac{w}{|q_1|}(y-x)}|J_f-J_g|dydq.
\end{align*}
Then, using Lemma \ref{phi2} and Lemma \ref{JJ}, we can control the last term as follows:
\begin{align*}
& \int_{q_1>0}\frac{w}{q_1}\int_0^xe^{-\frac{w}{q_1}(x-y)}|J_f-J_g|dydq+\int_{q_1<0} \frac{w}{|q_1|}\int_x^1e^{-\frac{w}{|q_1|}(y-x)}|J_f-J_g|dydq\cr
&\hspace{1.2cm}\le C_9\biggl\{\int_{q_1>0}\frac{w}{q_1}e^{-C_8\sqrt{1+|q|^2}}\int_0^xe^{-\frac{w}{q_1}(x-y)}dydq\cr
&\hspace{1.2cm}+\int_{q_1<0} \frac{w}{|q_1|}e^{-C_8\sqrt{1+|q|^2}}\int_x^1e^{-\frac{w}{|q_1|}(y-x)}dydq\biggl\}\|f-g\|_{L^1_q}\cr
&\hspace{1.2cm}\le \frac{16C_9}{C^2_8}\left(2w\ln\frac{1}{w}+(1+e)w+\frac{\sqrt{2}w^2}{C_8}e^{-\frac{C_8}{\sqrt{2}w}}\right)\|f-g\|_{L^1_q}.
\end{align*}
For sufficiently small $w$, this gives the desired results.
\end{proof}
\noindent{\bf Acknowledgement}
This research was supported by Basic Science Research Program through the National Research Foundation of Korea(NRF) funded by the Ministry of Education(NRF-2016R1D1A1B03935955)

\end{document}